\setlist[itemize]{topsep=0ex,itemsep=0ex,parsep=0.4ex}
\setlist[enumerate]{topsep=0ex,itemsep=0ex,parsep=0.4ex}
\crefname{lem}{Lemma}{Lemmas}
\crefname{thm}{Theorem}{Theorems}
\crefname{cor}{Corollary}{Corollaries}
\crefname{prop}{Proposition}{Propositions}
\crefname{conj}{Conjecture}{Conjectures}
\crefname{open}{Open Problem}{Open Problems}
\crefname{claim}{Claim}{Claims}
\newcommand{\defn}[1]{\textcolor{Maroon}{\emph{#1}}}
\newcommand{\GG}{\mathcal{G}}
\def\NAT@spacechar{~}
\DeclarePairedDelimiter{\floor}{\lfloor}{\rfloor}
\DeclarePairedDelimiter{\ceil}{\lceil}{\rceil}
\renewcommand{\geq}{\geqslant}
\renewcommand{\leq}{\leqslant}
\DeclareMathOperator{\dist}{dist}
\DeclareMathOperator{\rad}{rad}
\DeclareMathOperator{\diam}{diam}
\DeclareMathOperator{\tw}{tw}
\DeclareMathOperator{\ttw}{tree-tw}
\DeclareMathOperator{\pw}{pw}
\renewcommand{\thefootnote}{\fnsymbol{footnote}}
\theoremstyle{plain}
\newtheorem{thm}{Theorem}
\newtheorem{lem}[thm]{Lemma}
\newtheorem{cor}[thm]{Corollary}
\newtheorem{prop}[thm]{Proposition}
\theoremstyle{definition}
\newtheorem{conj}[thm]{Conjecture}
\begin{document}
\title{\bf Polynomial Bounds in the Apex Minor Theorem}
\author{Kevin Hendrey\footnotemark[2]
\qquad David R. Wood\footnotemark[2]}
\maketitle
\footnotetext[2]{School of Mathematics, Monash   University, Melbourne, Australia\\ 
\hspace*{4.5mm} \texttt{\{Kevin.Hendrey1,David.Wood\}@monash.edu}. }
%%%%%%%%%%%%%%%%%%%	
\begin{abstract}
A graph $A$ is \defn{apex} if $A-z$ is planar for some vertex $z\in V(A)$. Eppstein [\emph{Algorithmica}, 2000] showed that for a minor-closed class $\GG$, the graphs in $\GG$ with bounded radius have bounded treewidth if and only if some apex graph is not in $\GG$. In particular, for every apex graph $A$ and integer $r$, there is a minimum integer $g(A,r)$ such that every $A$-minor-free graph with radius $r$ has treewidth at most $g(A,r)$. We show that if $t=|V(A)|$ then $g(A,r)\in O^\ast(r^9t^{18})$ which is the first upper bound on $g(A,r)$ with polynomial dependence on both $r$ and $t$. More precisely, we show that every $A$-minor-free graph with radius $r$ has no $16rt^2 \times 16rt^2$ grid minor, which implies the first result via the Polynomial Grid Minor Theorem. A key example of an apex graph is the complete bipartite graph $K_{3,t}$, since $K_{3,t}$-minor-free graphs include and generalise graphs embeddable in any fixed surface. In this case, we prove that every $K_{3,t}$-minor-free graph with radius $r$ has no $4r(1+\sqrt{t})\times 4r(1+\sqrt{t})$ grid minor, which is tight up to a constant factor.
%A graph $A$ is \defn{apex} if $A-z$ is planar for some vertex $z\in V(A)$. Eppstein [\emph{Algorithmica}, 2000] showed that for a minor-closed class $\GG$, the graphs in $\GG$ with bounded radius have bounded treewidth if and only if some apex graph is not in $\GG$. In particular, for every apex graph $A$ and integer $r$, there is a minimum integer $f(A,r)$ such that every $A$-minor-free graph with radius $r$ has no $f(A,r)\times f(A,r)$ grid minor. We give the first proof of this result with polynomial dependence on both $r$ and $|V(A)|$; in particular, if $t=|V(A)|$ and $d=\deg_A(z)$, then we show that $f(A,r)\leq 16rtd \leq 16rt^2$. A key example of an apex graph is the complete bipartite graph $K_{3,t}$, since $K_{3,t}$-minor-free graphs include and generalise graphs embeddable in any fixed surface. In this case, we prove that $f(K_{3,t},r)\leq 4r(1+\sqrt{t})$, which is tight up to a constant factor.
\end{abstract}
%%%%%%%%%%%%%%%%%
\renewcommand*{\thefootnote}{\arabic{footnote}}

%%%%%%%%%%%%%%%%%%%%%%
\section{Introduction}
\label{Intro}

Treewidth\footnote{We consider finite simple undirected graphs $G$ with vertex-set $V(G)$ and edge-set $E(G)$. 
%For a vertex $v\in V(G)$, let $N_G(v):=\{w\in V(G): vw\in E(G)\}$. To \defn{contract} an edge $vw$ in a graph $G$ means to delete $v$ and $w$, and add a new vertex with neighbourhood $(N_G(v)\cup N_G(w))\setminus\{v,w\}$. 
A graph $H$ is a \defn{minor} of a graph $G$ if a graph isomorphic to $H$ can be obtained from a subgraph of $G$ by contracting edges. A graph $G$ is \defn{$H$-minor-free} if $H$ is not a minor of $G$. A graph class $\mathcal{G}$ is \defn{minor-closed} if for every $G\in\GG$ every minor of $G$ is in $\GG$. For a graph $H$, an \defn{$H$-model} in a graph $G$ is a collection $\{B_u:u\in V(H))$ of pairwise disjoint connected subgraphs of $G$, such that for each edge $uv\in V(H)$ there is an edge of $G$ between $B_u$ and $B_v$ which is said to \defn{represent} $uv$. Note that $H$ is a minor of $G$ if and only if there is an $H$-model in $G$.  A \defn{tree-decomposition} of a graph $G$ consists of a tree $T$ and a collection $(B_x:x \in V(T))$ such that: (a) $\bigcup\{B_x:x\in V(T)\}=V(G)$, (b) for every edge ${vw \in E(G)}$, there exists a node ${x \in V(T)}$ with ${v,w \in B_x}$; and 	(b) for every vertex ${v \in V(G)}$, the set $\{ x \in V(T) : v \in B_x \}$ induces a non-empty (connected) subtree of $T$. The \defn{width} of $(B_x:x \in V(T))$ is ${\max\{ |B_x| : x \in V(T) \}-1}$. The \defn{treewidth} of a graph $G$, denoted \defn{$\tw(G)$}, is the minimum width of a tree-decomposition of $G$. A \defn{path-decomposition} is a tree-decomposition where the underlying tree is a path, denoted by the corresponding sequence of bags. The \defn{pathwidth} of a graph $G$, denoted \defn{$\pw(G)$}, is the minimum width of a path-decomposition of $G$. } is the standard measure of how similar a graph  is to a tree, and is an important parameter in structural graph theory, especially Robertson and Seymour's graph minor theory, and also in algorithmic graph theory, since many NP-complete problems are solvable in linear time on graphs with bounded treewidth. See \citep{HW17,Bodlaender98,Reed97} for surveys on treewidth. 

A key example in the study of treewidth is the $n\times n$ grid \footnote{An \defn{$n\times m$ grid} is any graph isomorphic to the graph with vertex-set $\{1,\dots,n\}\times\{1,\dots,m\}$, where $(x_1,y_1)$ is adjacent to $(x_2,y_2)$ if and only if $|x_1-x_2|+|y_1-y_2|=1$.}, which has treewidth $n$ (see \citep{HW17}). In fact, a graph has large treewidth if and only if it has a large grid minor, as shown by the following `Grid Minor Theorem' of \citet{RS-V}, which is a cornerstone of graph minor theory.

\begin{thm}[\citep{RS-V}]
\label{GridMinorTheorem}
For every integer $k\geq 1$ there is a minimum integer $g(k)$ such that every graph with no $k\times k$ grid minor has treewidth at most $g(k)$.  
\end{thm}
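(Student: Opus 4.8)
The theorem asserts that graphs with no $k\times k$ grid minor have bounded treewidth; equivalently, I must produce a function $g$ such that $\tw(G)>g(k)$ forces a $k\times k$ grid minor, after which $g(k)$ is simply the least such integer, which exists by the well-ordering of $\NN$. (The converse implication, that a $k\times k$ grid minor forces treewidth at least $k$, is immediate from minor-monotonicity of treewidth together with the treewidth of the $k\times k$ grid being $k$, and plays no role.) The plan is to pass from large treewidth to a highly linked set of vertices, and then to convert such a set into a grid minor.

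First I would pass from treewidth to a \emph{well-linked} set, where $Z\subseteq V(G)$ is called well-linked if for all $A,B\subseteq Z$ with $|A|=|B|$ there are $|A|$ pairwise disjoint $A$--$B$ paths in $G$. A recursive decomposition argument --- repeatedly split the current piece along a separator, and whenever no small separator splits it ``well-linkedly'' put the piece into a bag and recurse on the sides --- shows that a graph with no well-linked set of size $h$ has treewidth $O(h)$. So, choosing $g(k)$ large enough in terms of a size $h=h(k)$ to be fixed at the end, any $G$ with $\tw(G)>g(k)$ contains a well-linked set $Z$ with $|Z|\ge h$.

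Next I would build a grid-like structure from $Z$. Partitioning $Z$ into halves $A$ and $B$, well-linkedness yields about $h/2$ pairwise disjoint $A$--$B$ paths $P_1,\dots,P_m$, the prospective columns. Applying well-linkedness again, one routes many further pairwise disjoint paths, each forced by a Menger-type separation argument to cross a large number of the $P_i$; a Ramsey/pigeonhole argument applied to the order in which each such path meets the $P_i$ then extracts a subfamily whose crossing pattern is monotone and non-interleaved. This produces a grid-like minor of order $k$: two families, each consisting of $k$ pairwise disjoint connected subgraphs, that cross one another in a planar pattern. Finally, contracting each prospective column and each prospective row to a single branch vertex and exploiting the monotone crossing pattern realizes exactly the adjacencies of the $k\times k$ grid --- planarity of the grid enters essentially here, and it is precisely the Ramsey step that supplies the non-interleaved pattern this needs. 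Tracing the size requirements backwards then fixes $h(k)$, hence $g(k)$.

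The hard part is the last two steps together: controlling how two families of paths cross so that a genuine $k\times k$ grid minor emerges, rather than merely an abstract configuration in which the two families pairwise touch. This is also why the bound on $g(k)$ coming from the argument of \citet{RS-V} is astronomically large; the polynomial bound $g(k)=O^{\ast}(k^{9})$ used throughout this paper instead rests on the far more intricate analysis of Chekuri--Chuzhoy and Chuzhoy, which we invoke as a black box.
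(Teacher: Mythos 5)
The paper states \cref{GridMinorTheorem} purely as a citation to \citet{RS-V} and supplies no proof of its own, so there is no proof in the paper against which to measure your argument; it can only be assessed on its own terms, as a sketch of the proof from the literature.

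As such a sketch it is broadly sound, and it follows a now-standard route through well-linked sets, which differs in presentation (though not in essential difficulty) from the original argument of \citet{RS-V}. The reduction from large treewidth to a large well-linked set, the use of Menger's theorem to extract disjoint ``column'' paths from a bipartition of that set, a second application of well-linkedness plus a separation argument to force ``row'' paths each crossing many columns, a Ramsey/pigeonhole cleanup of the crossing pattern, and the final contraction to an honest grid minor --- these are exactly the milestones of the modern expositions such as \citet{Reed97} and \citet{DJGT-JCTB99}. You also correctly locate the crux: converting two families of pairwise-crossing connected subgraphs into an ordered grid is where the real work lives, and the Ramsey step exists precisely to tame the crossing pattern. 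Your closing remark --- that the polynomial bound used throughout this paper comes not from this argument but from \citet{CC16,Chuzhoy15,CT21}, imported as a black box --- matches how the paper actually treats the matter: \cref{GridMinorTheorem} is stated qualitatively, and the quantitative bound appears separately as \eqref{GMT}.

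The one caveat is unavoidable at this level of resolution. The middle two steps bury substantial lemmas under the phrases ``a Menger-type separation argument'' and ``a Ramsey/pigeonhole argument,'' which name arguments rather than give them, and the preliminary claim that absence of a well-linked set of size $h$ forces treewidth $O(h)$ also needs more care than your single sentence suggests (one must show every relevant vertex set has a small balanced separator, then assemble these into a tree-decomposition, with attention to how bags are closed off). None of this is a wrong idea --- it is what a sketch of this theorem must look like --- but it should be clearly labelled a sketch rather than a proof.
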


Given the importance of \cref{GridMinorTheorem},  there has been substantial effort towards finding simplified proofs with improved bounds on $g(k)$; see \citep{DJGT-JCTB99,LeafSeymour15,RST94,KK20,CC16,Chuzhoy15,DH08,DHK-Algo09}. The best known bound, due to \citet{CT21}, says that 
\begin{equation}
    \label{GMT}
    g(k)\in O^\ast(k^9),
\end{equation}
where $O^\ast(\ldots)$ notation hides poly-logarithmic factors. 

The next lemma is an easy result of \citet[(1.3) \& (1.4)]{RST94}.

\begin{lem}[\citep{RST94}]
\label{PlanarGrid}
Every planar graph with $t$ vertices is a minor of the $2t\times2t$ grid. 
\end{lem}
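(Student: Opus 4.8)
The plan is first to replace $H$ by a planar graph with a Hamiltonian cycle on $2t-2$ vertices that still has $H$ as a minor, and then to route any such graph into a grid of essentially the same side length.

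After disposing of the cases $t\le 2$ directly, I would reduce to the case that $H$ is connected: adding an edge inside a face of a fixed plane embedding of $H$ keeps the graph planar and does not change its number of vertices, and since $H$ is a subgraph --- hence a minor --- of the result, it is enough to treat that graph. So assume $H$ is connected, and fix a plane embedding and a spanning tree $T$ of $H$.

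Viewing the embedding on the sphere $S^2$ and cutting along $T$ produces an open disk $D=S^2\setminus T$ whose boundary, traversed once, is the Euler tour of $T$; this passes through each of the $\sum_{v\in V(H)}\deg_T(v)=2(t-1)$ corners of $T$ exactly once (a \emph{corner} being a maximal angular region at a vertex bounded by consecutive edges of $T$ in the rotation). Let $C$ be the cycle on these $2t-2$ corners in boundary order, and let $H''$ be obtained from $C$ by adding: for each non-tree edge $xy$ of $H$, an edge joining the corner of $x$ and the corner of $y$ that contain the two ends of $xy$; and, for each vertex $v$ of $H$, a path through the corners of $v$ in cyclic rotational order. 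I would then verify that $|V(H'')|=2t-2$; that $H''$ is planar, since the non-tree edges become pairwise non-crossing chords of $C$ inside $D$ while the added paths can be drawn, without crossings, inside the complementary disk (a neighbourhood of $T$); that $C$ is a Hamiltonian cycle of $H''$; and that contracting, for each $v$, the path through the corners of $v$ turns $H''$ (after deleting the resulting loops and parallel edges) into $H$, so that $H$ is a minor of $H''$.

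It then remains to prove the self-contained statement that \emph{every planar graph on $N$ vertices with a Hamiltonian cycle is a minor of the $(N+2)\times(N+2)$ grid}, and to apply it with $N=2t-2$. For this, fix a plane embedding of such a graph, let $C$ be its Hamiltonian cycle, and observe that the remaining edges split into those drawn inside $C$ and those drawn outside $C$, each family consisting of pairwise non-crossing chords of $C$. I would draw $C$ as a rectangular cycle running roughly through the middle of the grid, with its vertices spread far enough apart that each has room towards both the inside and the outside, route the inside chords within the region bounded by $C$ and the outside chords within the region between $C$ and the grid boundary (the non-crossing property keeping all routes pairwise disjoint), and enlarge each branch set along the row and column through its vertex to accommodate vertices of large degree. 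I expect this routing to be the main obstacle: the topology makes it clear that a linear-size grid suffices, but packing the routes tightly enough to reach side exactly $N+2$, rather than some larger $O(N)$, needs a careful and somewhat technical layout. (Checking the properties of $H''$ in the previous paragraph is routine, though the planarity of the union of the corner-paths deserves a little care.)
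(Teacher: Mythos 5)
Your overall plan is exactly the one in the cited source \citep{RST94}: reduce to a connected graph, take a spanning tree $T$, walk around $T$ to produce a planar graph $H''$ on $\sum_v \deg_T(v)=2t-2$ vertices with a Hamiltonian cycle $C$ such that contracting, for each $v$, the path through the corners of $v$ recovers $H$ as a minor, and then appeal to the fact that a planar Hamiltonian graph on $N$ vertices embeds as a minor in a grid of side about $N$. The construction of $H''$ is essentially right. (One small imprecision: cutting the sphere along $T$ yields a single disk, not a disk and a ``complementary disk''; the picture you want is to take $C$ to be the boundary of a tubular neighbourhood of $T$, so that $C$ bounds two disks, with the non-tree edges in the one avoiding $T$ and the corner-paths drawable as small arcs near each vertex in the one containing $T$.)

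The genuine gap is the sub-lemma that carries all the quantitative content: you assert that every planar graph on $N$ vertices with a Hamiltonian cycle is a minor of the $(N+2)\times(N+2)$ grid, but you do not prove it, and this is precisely statement (1.3) of \citep{RST94}, i.e.\ the heart of the cited result rather than a routine step. Moreover the layout you sketch --- drawing $C$ as a rectangular cycle near the middle of the grid --- is awkward: the chords that must go on the outside of $C$ then have to be routed in the annulus between the rectangle and the grid boundary, and it is not clear this can be done, let alone within side length $N+2$. The standard way to realise (1.3) is to lay the $N$ Hamiltonian vertices out along a single row (or along the main diagonal) so that the inside chords and the outside chords of $C$ occupy the two half-grids; since each family is pairwise non-crossing, they can be routed as nested staircase paths, with each branch set extended along its column to reach the chords it must meet. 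Without an explicit grid routing of this kind achieving the stated side length, the Euler-tour reduction alone does not prove the lemma.
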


\cref{GridMinorTheorem,PlanarGrid} imply the following `Planar Minor Theorem'.

\begin{thm}[\citep{RS-V,RST94}]
\label{PlanarMinorTheorem}
For every planar graph $H$ there is a minimum integer $g(H)$ such that every $H$-minor-free graph has treewidth at most $g(H)$. 
\end{thm}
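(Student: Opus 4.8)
The plan is to derive the statement directly from \cref{GridMinorTheorem,PlanarGrid}, using nothing more than transitivity of the minor relation. Fix a planar graph $H$ and let $t=\lvert V(H)\rvert$. By \cref{PlanarGrid}, $H$ is a minor of the $2t\times 2t$ grid.

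First I would record the (immediate) fact that the minor relation is transitive: if $H$ is a minor of a graph $J$ and $J$ is a minor of a graph $G$, then $H$ is a minor of $G$ — one simply composes the two models, contracting each branch set of the $H$-model of $J$ together with the branch sets of the $J$-model of $G$ that it meets. Applying this with $J$ the $2t\times 2t$ grid, any graph $G$ that has a $2t\times 2t$ grid minor also has an $H$-minor. Taking the contrapositive, every $H$-minor-free graph has no $2t\times 2t$ grid minor.

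Next I would apply \cref{GridMinorTheorem} with $k=2t$: every graph with no $2t\times 2t$ grid minor has treewidth at most $g(2t)$. Combining the two previous steps, every $H$-minor-free graph has treewidth at most $g(2t)$. Therefore the set of nonnegative integers $c$ such that every $H$-minor-free graph has treewidth at most $c$ is nonempty (it contains $g(2t)$), so by the well-ordering principle it has a least element; this least element is the desired $g(H)$, and in particular $g(H)\le g(2t)$.

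I do not expect a genuine obstacle here: the result is a short corollary of the two quoted statements, and the only point needing a word of justification is the existence of the \emph{minimum} such integer, which is immediate from well-ordering. To close, I would note that substituting the quantitative bound \eqref{GMT} yields $g(H)\in O^\ast(t^9)$, making the dependence on $\lvert V(H)\rvert$ explicit; this is the planar analogue of the apex bound established later in the paper.
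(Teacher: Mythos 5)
Your proof is correct and is exactly the derivation the paper intends: the paper states \cref{PlanarMinorTheorem} as an immediate consequence of \cref{GridMinorTheorem,PlanarGrid} without spelling out the details, and your argument (transitivity of the minor relation plus the contrapositive, then the grid minor theorem with $k=2t$) is precisely how one fills them in.
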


A result like \cref{PlanarMinorTheorem} only holds for planar graphs, since for any non-planar graph $H$,  grids are $H$-minor-free with unbounded treewidth. Put another way, a minor-closed class $\GG$ has bounded treewidth if and only if some planar graph is not in $\GG$. 

The following natural question arises: do more general minor-closed classes have bounded treewidth under some extra assumptions. Given that grids have unbounded radius, it is natural to consider bounded radius as the extra assumption. This direction was initiated by \citet{Eppstein-Algo00}\footnote{Eppstein worked with diameter instead of radius, but the difference is negligible, since $\rad(G) \leq \diam(G) \leq 2 \rad(G)$ for every graph $G$.} who proved the following `Apex Minor Theorem'. 

Here a graph $A$ is \defn{apex} if $A-z$ is planar for some vertex $z$ in $A$ (or $V(A)=\emptyset$). Apex-minor-free graphs include planar graphs (since $K_5$ or $K_{3,3}$ is apex), and more generally, includes graphs embeddable in any fixed surface\footnote{The \defn{Euler genus} of a surface with $h$ handles and $c$ crosscaps is $2h+c$. The \defn{Euler genus} of a graph $G$ is the minimum Euler genus of a surface in which $G$ embeds with no crossings. It follows from Euler's formula that every $n$-vertex $m$-edge graph $G$ with Euler genus at most $g$ satisfies $m\leq 3(n+g-2)$, and if $G$ is bipartite then $m\leq 2(n+g-2)$. In particular, if $G=K_{3,t}$ then $3t=m\leq 2(n+g-2)=2(t+g+1)$, implying $t\leq 2g+2$. That is, $K_{3,2g+3}$ has Euler genus greater than $g$. Since Euler genus is minor-monotone, every graph with Euler genus at most $g$ is $K_{3,2g+3}$-minor-free.} (since it follows from Euler's formula that graphs of Euler genus $g$ are $K_{3,2g+3}$-minor-free, and $K_{3,t}$ is apex since $K_{2,t}$ is planar). 

\begin{thm}[\citep{Eppstein-Algo00}]
\label{ApexMinorTheorem}
For every apex graph $A$ and integer $r\geq 0$ there is a minimum integer $g(A,r)$ such that every $A$-minor-free graph with radius at most $r$ has treewidth at most $g(A,r)$. 
\end{thm}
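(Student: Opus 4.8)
My plan is to apply the Grid Minor Theorem (\cref{GridMinorTheorem}): since the treewidth of a graph is bounded in terms of its largest grid minor, it suffices to find $k=k(A,r)$ (I would aim for $k\approx 16rt^{2}$, where $t:=|V(A)|$) such that no $A$-minor-free graph of radius at most $r$ has a $k\times k$ grid minor. Equivalently, I would prove the contrapositive: \emph{if a graph $G$ has radius at most $r$ and a $k\times k$ grid minor with $k$ large, then $G$ has an $A$-minor.} To set up, fix an apex vertex $z$ of $A$, so that $A-z$ is planar with $t-1$ vertices; by \cref{PlanarGrid}, $A-z$ is a minor of the $2t\times 2t$ grid. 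Also fix a centre $v_{0}$ of $G$ and a BFS spanning tree $T$ rooted at $v_{0}$, so that every vertex $v$ is joined to $v_{0}$ by a path $P_{v}\subseteq T$ on at most $r+1$ vertices.

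Now suppose $G$ has an $N\times N$ grid minor $\mathcal{H}$ with branch sets $(B_{i,j})$, where $N$ is large. Recall that an $A$-model consists of an $(A-z)$-model together with one further connected branch set $B_{z}$, disjoint from the others, and adjacent to the branch set of each neighbour of $z$ in $A$. I would realise the $(A-z)$-model inside a sub-grid-minor of $\mathcal{H}$ occupying a $2t\times 2t$ (or slightly larger) block of branch sets placed away from a fixed corner of $\mathcal{H}$, using \cref{PlanarGrid}. For $B_{z}$, the idea is to start from $v_{0}$, follow the BFS paths $P_{v}$ out to vertices lying in or adjacent to each of the $t-1$ branch sets of the $(A-z)$-model, and then pad this with a connected portion of $\mathcal{H}$ --- say a full row and column of the grid together with ``teeth'' reaching the chosen block --- so that $B_{z}$ becomes connected and adjacent to all $t-1$ branch sets. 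Contracting every branch set then yields an $A$-model in $G$, contradicting $A$-minor-freeness; hence $\tw(G)$ is bounded, as required.

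The step I expect to be the main obstacle is the simultaneous routing. The BFS paths feeding $B_{z}$ are not under our control and together may pass through up to roughly $tr$ branch sets of $G$, so the $(A-z)$-model must be placed in a block that dodges all of these (as well as the row, column, and teeth reserved for $B_{z}$) while remaining reachable from $B_{z}$. Since $\mathcal{H}$ contains $\Omega\big((N/t)^{2}\big)$ pairwise disjoint candidate blocks, a pigeonhole argument produces a usable one once $N$ is polynomially large in $r$ and $t$, and carrying this bookkeeping through --- together with the ``thin'' embedding of $A-z$ needed so the teeth can meet every branch set --- is exactly what inflates the bound from $2t\times 2t$ to roughly $16rt^{2}\times 16rt^{2}$. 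For the special case $A=K_{3,t}$, the planar graph $A-z=K_{2,t}$ is merely a bundle of $t$ internally disjoint paths of length two, which embeds very thinly in the grid (essentially along one row), so both the embedding and the routing become far more economical and the bound improves to $4r(1+\sqrt{t})$.
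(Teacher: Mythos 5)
Your high-level strategy — reduce via \cref{GridMinorTheorem} to excluding large grid minors, build an $A$-model from a hypothetical grid minor plus BFS paths to a centre $v_0$, and use \cref{PlanarGrid} to model $A-z$ in a small grid — is exactly the strategy behind the paper's polynomial-bound proof (Lemma~\ref{ApexGridMinor}). But there is a genuine gap in the collision-avoidance step, which is the heart of the matter.

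You propose to place the whole $(A-z)$-model in a single $O(t)\times O(t)$ block of grid branch sets and to find, by pigeonhole over the $\Theta((N/t)^2)$ disjoint blocks, one block that dodges the BFS paths. This cannot work as stated, because the BFS paths you must dodge \emph{emanate from the block itself}: their endpoints lie in (or next to) the chosen block, so the set of roughly $tr$ branch sets they traverse changes as you move the block. You are not avoiding a fixed small set of branch sets, and the counting does not close. To see this concretely, form the digraph $D$ on the $N^2$ grid branch sets with an arc $B\to B'$ whenever the interior of the BFS path from $B$ meets $B'$. Then $D$ has out-degree at most $r-1$, hence at most $rN^2$ arcs. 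A block is usable only if no arc of $D$ has both ends in it; summed over the $(N/2t)^2$ aligned blocks, the number of arcs with both ends in a common block could be as large as $rN^2$, which exceeds the number of blocks whenever $4rt^2\geq 1$, i.e.\ always. Making $N$ larger does not help: both the number of blocks and the number of arcs scale as $N^2$. So ``polynomially large $N$'' does not produce a clean block by pigeonhole alone.

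The paper avoids this by \emph{not} confining the $(A-z)$-model to a small block. In Lemma~\ref{ApexGridMinor}, each branch set $C_u$ is spread as a union of randomly-offset, width-one ``crosses'' and ``bars'' across all the $n\times n$ subgrids belonging to $u$, after first applying \cref{DoublingTrick} to reserve in each $B_u$ a vertex $h_u$ incident to no representing edge. The thinness of the spread model makes a fixed grid vertex lie in $\bigcup_w C_w$ with probability only $O(1/n)$, and the random offset independently makes any given candidate anchor $h'_u=(a,b,i,i)$ be selected with probability $1/n$; the ``free'' vertex guarantee is exactly what ensures these two events are independent. Hence a given bad (anchor, BFS-path-vertex) pair occurs with probability $O(1/n^2)$, and a union bound over the $O(drn)$ such pairs succeeds once $n=\Theta(dr)$. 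This ``thin, spread, randomly offset, provably independent'' construction is the missing idea in your plan, and it is what yields the $16rtd$ (so $16rt^2$) bound. A smaller remark: padding $B_z$ with a row, column and teeth drawn from the grid cannot replace the BFS paths, since a branch set built only from grid pieces cannot force a non-planar minor; the radius hypothesis enters the proof only through the BFS paths to $v_0$, and they must be the part of $B_z$ you are protecting. (The paper's alternative, much simpler proof in Lemma~\ref{key} avoids all of this by an induction on $r$, at the cost of an exponential bound $k^r$.)
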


A result like \cref{ApexMinorTheorem} only holds for apex graphs, since for any non-apex graph $H$, the graphs obtained from grids by adding one dominant vertex are $H$-minor-free with radius $1$ and unbounded treewidth. That is, for a minor-closed class $\GG$, the graphs in $\GG$ with bounded radius have bounded treewidth if and only if some apex graph is not in $\GG$. 

Eppstein's motivation for proving  \cref{ApexMinorTheorem} is that it leads to efficient approximation algorithms for apex-minor-free graphs, since Baker's method~\citep{Baker94}, originally developed for planar graphs, works in any minor-closed class where graphs of bounded radius have bounded treewidth. See \citep{DH08a,DFHT04,DHT06} for more algorithmic applications in this direction. 

As in the Grid Minor Theorem, there has been substantial interest in obtaining simpler proofs and improved bounds in the Apex Minor Theorem. First note that the bound on $g(A,r)$ in Eppstein's proof of the Apex Minor Theorem is at least exponential in $r$. \citet{DH-Algo04} gave a simpler proof, still with exponential dependence on $r$. Both these proofs use the Grid Minor Theorem, and the obtained bounds on $g(A,r)$ are still exponential in $r$ even using the above-mentioned polynomial bound in the Grid Minor Theorem. 

\citet{DH08} gave another proof of the Apex Minor Theorem with linear dependence on $r$. Such a result can also be concluded from recent work on layered treewidth~\citep{DMW17} or row treewidth~\citep{DJMMUW20}. All these proofs use the Graph Minor Structure Theorem of \citet{RS-XVI} (which in turn uses the Grid Minor Theorem), and as a consequence the dependence on $|V(A)|$ is very large. 

In summary, all the known proofs of the Apex Minor Theorem have at least exponential dependence on $r$ or $|V(A)|$. The goal of this paper is to address this shortcoming. 

The first contribution of this paper, presented in \cref{SimpleProof},  is to give a new simple proof of the Apex Minor Theorem. The main result of this paper, presented in \cref{ProbProof}, gives a more involved proof of the Apex Minor Theorem with polynomial dependence on both $|V(A)|$ and $r$. Like all previous proofs of the Apex Minor Theorem, both our proofs use the Grid Minor Theorem, so our focus is entirely on grid minors in apex-minor-free graphs. In particular, we prove the following result:

\begin{thm}
\label{PolyApexMinorTheorem}
Let $A$ be an apex graph such that $A-z$ is planar for some $z\in V(A)$. Let $t:=|V(A)|$ and $d:=\deg_A(z)$. Then any $A$-minor-free graph $G$ with radius at most $r$ has no $16 rtd \times 16rtd$ grid minor, implying $\tw(G)\in O^\ast(r^9t^9d^9) \subseteq O^\ast(r^9t^{18})$.
\end{thm}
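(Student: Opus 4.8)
The plan is to prove the contrapositive of the grid‑minor assertion and then invoke the Polynomial Grid Minor Theorem. So suppose $G$ has radius at most $r$ and contains a grid minor $H$ of side length $K:=16rtd$, with branch sets $\{B_{i,j}:1\le i,j\le K\}$ and representing edges; fix a vertex $\rho$ of $G$ with eccentricity at most $r$. The goal is to exhibit a model of $A$ in $G$ — branch sets $\{C_u:u\in V(A)\setminus\{z\}\}$ forming a model of the planar graph $A-z$, together with one further connected subgraph $C_z$, disjoint from all the $C_u$ and adjacent to $C_u$ for each of the $d$ neighbours $u$ of $z$ in $A$ — which contradicts $A$‑minor‑freeness, so no such $H$ can exist. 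The treewidth bound then follows: since $G$ has no $16rtd$‑grid minor, the bound \eqref{GMT} gives $\tw(G)\in O^\ast\bigl((16rtd)^9\bigr)\subseteq O^\ast(r^9t^9d^9)\subseteq O^\ast(r^9t^{18})$, the last inclusion because $d\le t-1$.

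To build the model, first realise $A-z$ together with access points. Let $J$ be the planar graph obtained from $A-z$ by attaching a new pendant vertex $p_u$ to each $u\in N_A(z)$; then $|V(J)|=(t-1)+d\le 2t-2$, so by \cref{PlanarGrid} the graph $J$ is a minor of the $(4t{-}4)\times(4t{-}4)$ grid, hence of the $4t\times 4t$ grid. Embed this $4t\times 4t$ grid into $H$ after a blow‑up: replace each of its cells by a sub‑grid of $H$ of side length $\Theta(rd)$, so that each edge becomes a path of length $\Theta(rd)$; this fits inside $H$ because $4t\cdot\Theta(rd)\le K$, and it realises $J$ as a minor of $G$ in which the branch sets of the core $A-z$ occupy only $O(t)$ widely separated rows and columns of $H$, separated by empty corridors of width $\Theta(rd)\gg r$. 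Write $C_u$ $(u\in V(A-z))$ for the core branch sets and $C_{p_u}$ for the pendants. Now use the radius bound: for each $u\in N_A(z)$ pick a path $Q_u$ of length at most $r$ in $G$ from $\rho$ to a vertex of $C_{p_u}$ (possible since $\rho$ is within distance $r$ of every vertex), and set $C_z:=\{\rho\}\cup\bigcup_{u\in N_A(z)}\bigl(Q_u\cup C_{p_u}\bigr)$. Then $C_z$ is connected, since all its pieces meet $\rho$ and each $Q_u$ reaches $C_{p_u}$; and after contracting each $C_{p_u}$ into $C_z$, the set $C_z$ is adjacent to $C_u$ for every $u\in N_A(z)$, because $p_uu\in E(J)$. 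Thus $\{C_u\}_{u\in V(A-z)}\cup\{C_z\}$ is a model of $A$ — provided $C_z$ meets no core branch set $C_u$.

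The crux, and the only place where the bound $16rtd$ is really spent, is exactly this last disjointness. A priori each $Q_u$ is an arbitrary shortest path in $G$ and might cut through a core branch set. The role of the $\Theta(rd)$ blow‑up (hence the factor $r$) and of having $d$ separate legs to route (hence the factor $d$) is that the core now occupies only a vanishing fraction of $H$ — about $O(t)$ lines, separated by corridors far wider than the length of any $Q_u$ — so that the paths $Q_u$ have very little room to interfere with it; one then argues, by choosing the layout of the $J$‑model and the paths $Q_u$ appropriately (a probabilistic argument of the kind developed in \cref{ProbProof}), that with positive probability none of the $Q_u$ meets the core, whence a valid choice of model exists. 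I expect this routing‑and‑avoidance step — rather than anything concerning $A$ or planarity, which \cref{PlanarGrid} dispatches cleanly — to be the main obstacle, and in particular the part where keeping every estimate polynomial in $r$, $t$ and $d$ requires the most care.
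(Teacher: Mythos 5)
Your high-level plan matches the paper's: realise $A-z$ together with access points as a minor of a blown-up grid inside $G$, route short paths from a central vertex $\rho$ to the access points to build $C_z$, and argue probabilistically that a suitable layout makes $C_z$ disjoint from the core model. Your pendant-vertex graph $J$ is in the same spirit as the paper's ``free vertex'' trick (\cref{DoublingTrick}), and $16rtd$ is the right scaling. But the decisive probabilistic step --- which you defer with ``one then argues\dots a probabilistic argument of the kind developed in \cref{ProbProof}'' --- is exactly where the whole difficulty of the theorem lives, and your stated intuition for it is wrong in a way that matters. The paths $Q_u$ live in $G$, not in the grid minor, so ``corridors far wider than the length of any $Q_u$'' do not constrain them at all: a single edge of $Q_u$ can jump between arbitrarily distant cells of the grid or leave it entirely, regardless of corridor width.

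The reason the bound actually works is different: after the core model is placed \emph{randomly} within blocks of side $n\approx 4rd$ (in the paper, via a random cross in each $n\times n$ block), any fixed grid vertex lies in the core with probability $O(1/n)$, while each $Q_u$ touches at most $r-1$ grid vertices, so a union bound over the $d$ paths gives roughly $dr/n<1$. But this union bound only closes if the choice of access point and the core's position at $Q_u$'s vertices are \emph{independent}. They are a priori not, because the path $Q_u$ is determined by the access point $h'_u$, which is determined by the very same randomness that places the core. The paper manufactures the needed independence precisely through \cref{DoublingTrick}: $h_u$ is incident to no representing edge, so the random coordinate controlling the access point in the block $S_{h_u}$ has no influence on the core anywhere a path-vertex could sit. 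Your construction of $J$ with pendants $p_u$ does not arrange for this decoupling, and without it the union bound does not go through. So: correct skeleton and correct trivial wrap-up via \eqref{GMT}, but the central probabilistic lemma is both missing and mis-motivated, and the specific mechanism (free vertices / doubling) that makes it provable is absent.
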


This is the first such bound on $\tw(G)$ that is polynomial in both $r$ and $|V(A)|$. 

We now show that \cref{PolyApexMinorTheorem} can be combined with a result of \citet{KK20} to prove the Apex Minor Theorem with linear dependence on $r$ and explicit dependence on $|V(A)|$. (All previous proofs of the Apex Minor Theorem with linear dependence on $r$ use Robertson and Seymour's Graph Minor Structure Theorem, which has much larger dependence on $|V(A)|$.)

\begin{cor}
\label{LinearApexMinorTheorem}
For any apex graph $A$ with $t$ vertices, every $A$-minor-free graph $G$ with radius at most $r$ has treewidth at most $t^{O(t)} r$.
\end{cor}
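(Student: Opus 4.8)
The plan is to deduce this from \cref{PolyApexMinorTheorem} combined with a known linear relationship between treewidth and grid-minor size in graphs excluding a fixed minor, due to \citet{KK20}. Concretely, \citet{KK20} proved that there is a function $c$ with $c(t)\le t^{O(t)}$ such that every $K_t$-minor-free graph with no $k\times k$ grid minor has treewidth at most $c(t)\,k$; the essential feature for us is this $t^{O(t)}$ dependence on $t$.

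The argument then runs as follows. Let $A$ be an apex graph with $t:=|V(A)|$, fix $z\in V(A)$ with $A-z$ planar, and set $d:=\deg_A(z)\le t-1$. Let $G$ be an $A$-minor-free graph with radius at most $r$; we may assume $r\ge1$ and $d\ge1$, since the cases $r=0$ (so $G$ has a single vertex) and $d=0$ (so $A$ is planar and \cref{PlanarMinorTheorem} bounds $\tw(G)$ by a function of $t$ alone) are trivial. Since $A$ is a spanning subgraph of $K_t$, any graph with a $K_t$ minor also has an $A$ minor, so $G$ is $K_t$-minor-free. By \cref{PolyApexMinorTheorem}, $G$ has no $16rtd\times16rtd$ grid minor, and since $d<t$ it has no $16rt^2\times16rt^2$ grid minor either, because such a grid contains a $16rtd\times16rtd$ grid as a subgraph. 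Applying \citet{KK20} with $k=16rt^2$ then gives $\tw(G)\le c(t)\cdot16rt^2\le t^{O(t)}\,r$, as claimed.

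So the corollary is essentially a one-line reduction and there is no real combinatorial obstacle: excluding $A$ as a minor excludes $K_t$ as a minor, and having no small grid minor forces having no large grid minor. The one place where care is needed --- and where I would concentrate the effort --- is checking that the dependence on $t$ in the bound of \citet{KK20} is genuinely at most $t^{O(t)}$, either by quoting their statement in the required explicit form, or, if it is stated only qualitatively, by tracking the constants through their proof.
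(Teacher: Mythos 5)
Your proposal has the same two ingredients as the paper's proof---\cref{PolyApexMinorTheorem} together with the linear grid--treewidth bound of \citet{KK20}---but there is a real gap in the way you invoke \citet{KK20}. The form of their theorem that the paper quotes is: every $H$-minor-free graph with no $k\times k$ grid minor has treewidth at most $|V(H)|^{O(|E(H)|)}\,k$. The paper applies this \emph{directly with $H=A$}, which is the crucial move: because $A$ is apex on $t$ vertices, $|E(A)|<4t$, so the coefficient is $t^{O(t)}$. You instead pass to $K_t$ (observing correctly that $A$-minor-free implies $K_t$-minor-free) and then need a bound of the form $c(t)\le t^{O(t)}$ for $K_t$-minor-free graphs. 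But $|E(K_t)|=\Theta(t^2)$, so the quoted form of \citet{KK20} only gives $t^{O(t^2)}$ for $K_t$, which is weaker than what the corollary claims. You do flag this yourself as the point requiring verification, but as written the step fails: relaxing $A$ to $K_t$ throws away the sparsity of $A$, and it is precisely that sparsity (apex graphs have linearly many edges) that makes the exponent $O(t)$ rather than $O(t^2)$. The fix is simply to apply \citet{KK20} with $H=A$ rather than $H=K_t$; there is no need for the $K_t$ detour at all.
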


\begin{proof}
\citet{KK20} proved the following version of the Grid Minor Theorem (without using the Graph Minor Structure Theorem): For any graph $H$, every $H$-minor-free graph with no $k\times k$ grid minor has treewidth at most $|V(H)|^{O(|E(H)|)}k$. 
By \cref{PolyApexMinorTheorem}, this result is applicable with $H=A$ and $|V(H)|=t$ and $|E(H)|<4t$ and $k=16rt^2$. Thus $\tw(G) \leq t^{O(t)} (16 rt^2)\leq t^{O(t)}r$. 
\end{proof}

Our next contribution is to obtain improved bounds for specific graphs $A$ in the Apex Minor Theorem. Note that several papers have followed a similar direction for the Planar Minor Theorem\footnote{\citet{BRST91} showed that if $H$ is a forest, then $g(H)=|V(H)|-2$, and in fact every $H$-minor-free graph has pathwidth at most $|V(H)|-2$. \citet{FL94} showed that if $H$ is a cycle then $g(H)=|V(H)|-2$. \citet{BBR-DAM09} showed that $g(3)\leq 7$. If $H$ is an apex-forest, then \citet{LeafSeymour15} showed that $g(H)\leq \frac32|V(H)|-2$. In particular, $g(K_{2,t}) \leq \frac32 t+1$. If $H$ is a wheel, then \citet{RT17a} showed that $g(H)\leq 36|V(H)|-39$, improved to $2|V(H)|+O(\sqrt{|V(H)|})$ by \citet{GHOR24}. \citet{GHOR24} also showed  that $g(4)\leq 160$, and that if $H$ is a disjoint union of $k$ cycles, then $g(H)\leq \frac32|V(H)|+O(k^2\log k)$ and $g(H)\leq|V(H)|+O(\sqrt{|V(H)|})$ if $k=2$.}. We focus on $K_{3,t}$-minor-free graphs, which include and generalise graphs embeddable in any fixed surface, as explained above. 

\begin{restatable}{thm}{blah}
\label{K3tMinorFreeIntro}
Every $K_{3,t}$-minor-free graph $G$ with radius at most $r$ has  no $n\times n$ grid minor with $n:=\lceil 4r(1+\sqrt{t-1})\rceil$, implying $\tw(G)\in O^\ast(r^9t^{9/2})$.
\end{restatable}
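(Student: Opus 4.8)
The plan is to prove the contrapositive: assuming $G$ has radius at most $r$ and an $n\times n$ grid minor with $n:=\ceil{4r(1+\sqrt{t-1})}$, I will construct a $K_{3,t}$-model in $G$, contradicting $K_{3,t}$-minor-freeness. Fix a vertex $v_0$ of eccentricity at most $r$, a BFS spanning tree $T$ rooted at $v_0$ (so $T$ has depth at most $r$), and a model $(X_u:u\in V(H))$ of the $n\times n$ grid $H$. The guiding example is the case $r=1$: the $(k\times k)$-grid-plus-dominant-vertex graph is $K_{3,t}$-minor-free only for $t=\Omega(k^2)$, since one can build a $K_{3,\Omega(k^2)}$-model whose three hub branch sets are $\{v_0\}$ together with two interleaved ``comb'' subgraphs of the grid (each meeting $\Theta(k^2)$ cells), and whose leaf branch sets are single grid cells lying between the two combs. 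This is exactly where $\sqrt t$ rather than $t$ enters, and the general case should follow the same template, with the dominant vertex replaced by an apex-like connected subgraph built from short BFS paths.

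Concretely, I would first coarsen: partition (part of) $H$ into an $a\times a$ array of pairwise-disjoint $4r\times4r$ subgrids, where $a:=\floor{n/(4r)}$. Since $n\ge 4r(1+\sqrt{t-1})$ we get $a\ge 1+\floor{\sqrt{t-1}}$, hence $a^2\ge(1+\floor{\sqrt{t-1}})^2\ge t$ because $t-1<(\floor{\sqrt{t-1}}+1)^2$. Fix $t$ of these subgrids and let $S_1,\dots,S_t$ be their pairwise-disjoint connected pull-backs in $G$. Take the three hub branch sets to be: (i)--(ii) two ``comb'' subgraphs occupying part of $\bigcup_u X_u$, arranged so that every $S_k$ has a $G$-edge to each of them, exactly as in the $r=1$ picture; and (iii) a connected subgraph $Z$ obtained from $v_0$ together with a suitable subtree of $T$ that reaches each $S_k$ via a truncation of the BFS path from $S_k$ to $v_0$. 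As $T$ has depth at most $r$ these paths have length at most $r$, so $Z$ is ``shallow'', and the $4r$ spacing between the chosen subgrids is what should give enough room to keep $Z$ disjoint from the two combs and from all the $S_k$. Each $S_k$ then serves as a leaf branch set adjacent to all three hubs, and $t$ leaves are available, producing the forbidden $K_{3,t}$-minor.

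The main obstacle is the construction of the third hub $Z$ together with the disjointness verification. The spanning tree $T$ meets every branch set $X_u$, so $Z$ cannot simply be all of $T$; and the BFS paths from the $S_k$ to $v_0$ need not be internally disjoint and may pass through branch sets of other chosen subgrids or of the combs. The crux is therefore to choose, for each of the $t$ subgrids, an anchor deep inside it and a truncated BFS path toward $v_0$, so that the union of these truncated paths is connected (shared portions being absorbed into $Z$), is disjoint from the two combs and from every $S_k$, and still links each $S_k$ to $v_0$; this is where the constants $4r$ (spacing, so that BFS stubs of length $\le r$ cannot escape a $4r\times4r$ block through its boundary) and the count $1+\sqrt{t-1}$ (calibrated so that $(1+\sqrt{t-1})^2\ge t$) are used. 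I expect this selection --- plausibly via a greedy or charging argument --- to occupy the bulk of the proof; the remainder is routine bookkeeping with grid minors.

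Finally, the treewidth bound is a formal consequence: by the contrapositive just established, every $K_{3,t}$-minor-free graph $G$ with radius at most $r$ has no $n\times n$ grid minor, so the Polynomial Grid Minor Theorem (\cref{GridMinorTheorem} with the bound \eqref{GMT}) gives $\tw(G)\le g(n)\in O^\ast(n^9)=O^\ast\big((4r(1+\sqrt{t-1}))^9\big)\subseteq O^\ast(r^9t^{9/2})$, using $1+\sqrt{t-1}\le 2\sqrt t$.
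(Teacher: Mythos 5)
Your high-level template matches the paper's: coarsen the grid at scale $\Theta(r)$, take two interleaved column ``combs'' plus an apex-like subgraph built from short BFS paths as the three hub branch sets, and use the $\sqrt{t}$ calibration coming from the two-dimensional count of blocks. You also correctly name the crux (choosing anchors so the short BFS paths avoid the combs and the other anchors' branch sets) and the treewidth conclusion via \eqref{GMT} is fine. However, you leave that crux unresolved, and as set up it cannot be resolved, for two concrete reasons.

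First, you lean on a false locality intuition: ``$4r$ spacing, so that BFS stubs of length $\le r$ cannot escape a $4r\times4r$ block through its boundary.'' The BFS tree lives in $G$ (equivalently, in the contracted graph $G'$), not in the grid $J$: after contracting the branch sets $X_{i,j}$, a single edge of a BFS path can jump from one grid vertex to any other, or to a vertex of $G'$ outside the grid entirely. So the $4r$ spacing gives no control on where $I_x$ lands. The paper's Lemma~\ref{K3tGridRadius} handles this non-locally, by a probabilistic argument: the columns forming $X\cup Y$ are chosen uniformly and independently within blocks of width $2r-1$, so that for each fixed internal vertex $y\in I_x$ the probability $y\in X\cup Y$ is at most $\tfrac{1}{2r-1}$, and a union bound over the $\le r-1$ vertices of $I_x$ gives $\mathbb{P}(x \text{ survives})>\tfrac12$. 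Then a Tur\'an-type independent set extraction kills cross-interference ($I_x\cap Z_y=\emptyset$), and an averaging argument picks boundary rows $R_q,R_{n-q+1}$ that few paths touch. None of this is local in the grid, and none of it is implied by block spacing.

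Second, and more quantitatively fatal to your plan: you start with $a^2\approx t$ blocks (with $a=\floor{n/4r}\ge 1+\floor{\sqrt{t-1}}$, giving $a^2\ge t$ with essentially no slack) and you want all $\approx t$ of them to survive as leaves. But each BFS path has up to $r-1$ internal vertices, each of which can poison a block, so a naive charging loses a factor of $\Theta(r)$, and Tur\'an loses another $\Theta(r)$. The paper's candidate set $A$ has size $p(m-2s)\approx\Theta(n^2/r)=\Theta(rt)$ and is deliberately much larger than $t$; the three filtering steps (random column selection, Tur\'an independent set, extremal row choice) each shave a constant or $\Theta(r)$ factor, landing at $|A'''|\gtrsim n^2/(16r^2)\gtrsim t$. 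Your $4r\times4r$ blocks also make the leaves far larger than necessary, which only increases interference; the paper's leaves $Z_x$ are single horizontal segments lying in one row. So, to salvage your plan, you would need to (i) take one anchor per block with anchors much denser than one per $4r\times 4r$ block (the paper takes one candidate row every $2r$ rows but keeps \emph{all} columns), (ii) replace the locality claim with a genuine probabilistic/extremal selection of which columns become the combs and which anchors survive, and (iii) shrink the leaves to thin horizontal segments to control cross-interference. That is exactly the content of the paper's Lemma~\ref{K3tGridRadius}.
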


\cref{K3tMinorFreeIntro} implies the following bound on grid minors in graphs embeddable in a fixed surface\footnote{The best previous bound in \cref{SurfaceIntro} that we are aware of can be concluded from a result of \citet{Eppstein-Algo00}, which says that every graph with Euler genus $g$ and radius $r$ has treewidth $O(gr)$, which implies no $O(gr)\times O(gr)$ grid minor. This can also be concluded from results about layered treewidth~\citep{DMW17} or row treewidth~\citep{DJMMUW20}.}:

\begin{cor}
\label{SurfaceIntro}
Every graph with Euler genus $g$ and radius $r$ has  no 
$n\times n$ grid minor with $n:= \lceil4r(1+\sqrt{2g+2})\rceil$.
\end{cor}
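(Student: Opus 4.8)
The plan is to obtain \cref{SurfaceIntro} as an immediate corollary of \cref{K3tMinorFreeIntro}, using the fact that a graph of bounded Euler genus excludes a bounded complete bipartite graph $K_{3,t}$ as a minor. First I would recall the Euler-formula computation already carried out in the footnote to the Apex Minor Theorem: since $K_{3,t}$ is bipartite with $t+3$ vertices and $3t$ edges, a surface of Euler genus $g$ can embed it only if $3t \le 2((t+3)+g-2)$, i.e.\ $t \le 2g+2$. Hence $K_{3,2g+3}$ has Euler genus strictly greater than $g$, and since Euler genus is minor-monotone, every graph of Euler genus at most $g$ is $K_{3,2g+3}$-minor-free.

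With this in hand, given a graph $G$ of Euler genus at most $g$ and radius at most $r$, I would set $t := 2g+3$, note that $G$ is $K_{3,t}$-minor-free, and apply \cref{K3tMinorFreeIntro}. That theorem then says $G$ has no $n \times n$ grid minor with $n = \lceil 4r(1+\sqrt{t-1})\rceil$; substituting $t-1 = (2g+3)-1 = 2g+2$ gives exactly $n = \lceil 4r(1+\sqrt{2g+2})\rceil$, which is the claimed bound.

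There is no real obstacle here: the corollary is a one-line specialisation of \cref{K3tMinorFreeIntro}, and the only things to verify are the routine arithmetic $t-1 = 2g+2$ together with the standard minor-monotonicity of Euler genus (and the Euler-formula bound on edges of a bipartite graph on a surface). If a treewidth consequence is also wanted, one would feed this grid-minor bound into the Polynomial Grid Minor Theorem \eqref{GMT}, exactly as in the proof of \cref{PolyApexMinorTheorem}.
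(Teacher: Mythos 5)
Your proposal is correct and is exactly the intended derivation: the paper states \cref{SurfaceIntro} as an immediate consequence of \cref{K3tMinorFreeIntro}, relying on the fact (already established in a footnote via Euler's formula) that every graph of Euler genus at most $g$ is $K_{3,2g+3}$-minor-free, and then substituting $t=2g+3$ so that $t-1=2g+2$.
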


Our next result shows that \cref{K3tMinorFreeIntro,SurfaceIntro} are best possible up to a constant factor.

\begin{prop}
\label{LowerBoundIntro}
For any integers $g\geq 2$ and $r\geq 1$ there is a graph $G$ with radius at most $r$ and Euler genus at most $g$ (implying $G$ is $K_{3,2g+3}$-minor-free), such that the $n\times n$ grid is a subgraph of $G$, where $n:=(2r-1)\floor{\sqrt{g/2}} \approx r\sqrt{2g}$.
\end{prop}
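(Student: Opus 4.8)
The plan is to construct the required graph $G$ explicitly. Set $k:=\floor{\sqrt{g/2}}$ and $N:=(2r-1)k$, so that $n=N$; assume $r\ge 2$ (the case $r=1$ being degenerate and handled in the same way). Take the $N\times N$ grid $Q$, drawn in the plane in the standard way so that its bounded faces are exactly the unit squares, and partition it into a $k\times k$ array of pairwise vertex-disjoint $(2r-1)\times(2r-1)$ subgrids $B_{ij}$, writing each $B_{ij}$ on local coordinates $\{1,\dots,2r-1\}^2$ with its centre vertex $c_{ij}$ at $(r,r)$. Form $G$ from $Q$ by adding: (i) for each block $B_{ij}$, its two main diagonals, that is, the $4(r-1)$ edges joining $(a,a)$ to $(a{+}1,a{+}1)$ and joining $(a,2r{-}a)$ to $(a{+}1,2r{-}1{-}a)$ inside $B_{ij}$; and (ii) the $k^2-1$ edges from $c_{11}$ to every other $c_{ij}$. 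Then $Q\subseteq G$, so it remains to bound the radius and the Euler genus of $G$.

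For the radius, I would show that $c_{11}$ is within distance $r$ of every vertex of $G$. The heart of the argument is that inside a single block $B_{ij}$, using only its grid edges and its two added diagonals, every vertex is within distance $r-1$ of $c_{ij}$: any vertex $(x,y)$ lies in one of the four closed quadrants of $B_{ij}$ around $(r,r)$, say the quadrant with $x,y\ge r$ and (by symmetry) $x\le y$, and then the walk from $(r,r)$ along the diagonal to $(x,x)$ followed by a grid row to $(x,y)$ has length $(x-r)+(y-x)=y-r\le (2r-1)-r=r-1$. Consequently $\dist_G(c_{11},w)\le r-1$ whenever $w\in B_{11}$, and $\dist_G(c_{11},w)\le 1+\dist_G(c_{ij},w)\le 1+(r-1)=r$ whenever $w$ lies in another block $B_{ij}$, using the edge $c_{11}c_{ij}$. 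Hence the radius of $G$ is at most $r$.

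For the Euler genus, the key observation is that $Q$ together with all the added diagonals is still planar: each added diagonal can be drawn inside a single unit square of $Q$, and the diagonals occupy pairwise distinct unit squares — within one block the two main diagonals meet only at the centre vertex and otherwise run through disjoint squares, and distinct blocks are vertex-disjoint — so they can all be inserted into the planar drawing simultaneously without crossings. Adding one edge to a graph increases its Euler genus by at most $2$ (remove a disc from a face at each endpoint and glue in a handle joining the two, then route the new edge through it), so inserting the remaining $k^2-1$ edges incident to $c_{11}$ yields Euler genus at most $2(k^2-1)\le 2k^2\le g$, where the last step uses $k^2\le g/2$. Since every graph of Euler genus at most $g$ is $K_{3,2g+3}$-minor-free (as recalled in the introduction), this also gives the stated minor-freeness, completing the proof.

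The one genuinely creative step I expect is the choice of gadget: the main-diagonal edges act as shortcuts that let a single interior vertex of each block be within distance $r-1$ of the entire block while keeping that block planar, after which a single hub vertex — here recycled as $c_{11}$ — joined to all block centres inherits eccentricity at most $r$. Once the construction is fixed, the radius estimate is a direct computation and the genus bound is immediate from planarity of the diagonal-augmented grid together with the trivial fact that one new edge costs at most $2$ in Euler genus; the only real bookkeeping is checking that the added diagonals genuinely lie in distinct faces so that planarity is preserved.
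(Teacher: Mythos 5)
Your construction is essentially the paper's: a $(2r-1)k\times(2r-1)k$ grid augmented with diagonal shortcuts inside each $(2r-1)\times(2r-1)$ block so that the block centres form an $(r-1)$-dominating set while preserving planarity, followed by a hub joined to all block centres and embedded using one handle per hub edge. The only (cosmetic) difference is that you recycle $c_{11}$ as the hub and add $k^2-1$ edges, whereas the paper introduces a fresh apex vertex $\alpha$ adjacent to all $k^2$ centres; both give Euler genus at most $2k^2\leq g$.
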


In \cref{ttw}, we give an application of \cref{PolyApexMinorTheorem,K3tMinorFreeIntro} to tree-treewidth, which is a parameter introduced
recently by \citet{LNW}.

%%%%%%%%%%%%%%%%%%%%%%%%%%%%%%%%%%%%%%
\section{Simple Proof}
\label{SimpleProof}

This section presents a simple proof of the Apex Minor Theorem. The next lemma is the key. A \defn{centre} of a graph $G$ is a vertex $\alpha\in V(G)$ such that $\max\{\dist(\alpha,v):v\in V(G)\}$ is equal to the radius of $G$.

\begin{lem}
\label{key}
Let $A$ be a planar graph such that $A-z$ is a minor of the $k\times k$ grid for some $z\in V(A)$. Let $G$ be an $A$-minor-free graph with radius $r$ and centre $\alpha$. Then $G-\alpha$ has no $k^r\times k^r$ grid minor. 
\end{lem}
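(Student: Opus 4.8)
The plan is to induct on the radius $r$. For the base case $r = 0$, the graph $G$ consists of the single vertex $\alpha$, so $G - \alpha$ is empty and trivially has no $1 \times 1$ grid minor (indeed no vertices at all). Wait — more carefully, if $r=0$ then $G$ has exactly one vertex, and $k^0 = 1$, so we need $G-\alpha$ to have no $1\times 1$ grid minor; since $G-\alpha$ has no vertices, this holds. For $r \geq 1$, suppose for contradiction that $G - \alpha$ contains a $k^r \times k^r$ grid minor $H$. The idea is to partition the rows of this grid into $k$ consecutive blocks of $k^{r-1}$ rows each (and likewise think of the columns), and use the fact that $\alpha$ reaches every vertex of $G$ within distance $r$, hence reaches every branch set of $H$ within distance $r$, to build an $A$-model in $G$, contradicting $A$-minor-freeness.

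First I would set up the branch sets. Since $A - z$ is a minor of the $k \times k$ grid, it suffices to find a model of the $k\times k$ grid in $G$ together with an extra branch set, disjoint from all grid branch sets, adjacent to the branch sets corresponding to the neighbours of $z$; contracting appropriately then yields an $A$-model. So the goal reduces to: find a $k \times k$ grid minor in $G - \alpha$ whose branch sets can all be reached from $\alpha$, in a way compatible with routing the $z$-branch set through BFS layers around $\alpha$. Here is where the exponential drop in the grid size is used: carve the $k^r \times k^r$ grid into a $k \times k$ array of subgrids, each of size $k^{r-1} \times k^{r-1}$. Each subgrid is an induced subgrid of $G - \alpha$. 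Now consider the subgraph $G_\alpha$ of $G$ induced by $\alpha$ together with... actually the cleaner route: within the grid $H$, look at one such $k^{r-1}\times k^{r-1}$ subgrid $H'$; the subgraph of $G$ it spans, call it $G'$, when we add back $\alpha$ and take the BFS ball of radius $r-1$ around $\alpha$ inside $G$... The key claim to isolate is that there is a vertex $\alpha'$ of $G - \alpha$ at distance exactly $1$ from $\alpha$ such that the ball of radius $r-1$ about $\alpha'$ in $G - \alpha$ still contains a whole $k^{r-1}\times k^{r-1}$ subgrid of $H$ — or rather, that one of the $k^2$ subgrids lies entirely in a connected component $C$ of $G - \alpha$ that is within distance $r-1$ (in $G-\alpha$, using $\alpha$'s neighbour as a new centre) of some vertex. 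Then apply the induction hypothesis to a suitable radius-$(r-1)$ graph containing that subgrid.

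The main obstacle, and the step I would spend the most care on, is making the inductive descent genuinely produce an \emph{$A$-minor}, not just a contradiction about grids: we must ensure that across the $k$ levels of recursion the "used up" distance budget from $\alpha$ is exactly what is needed to attach a single apex branch set $B_z$ meeting the neighbours of $z$. Concretely, I expect the argument to run as follows: take the $k^r \times k^r$ grid minor in $G-\alpha$; since $\alpha$ is a centre of radius $r$, every branch set of the grid is within distance $r$ of $\alpha$ in $G$, so within distance $r-1$ of some neighbour of $\alpha$ — but not uniformly the same neighbour, which is the subtlety. Partition the grid into $k$ horizontal strips of height $k^{r-1}$. For the induction to bite, I want an entire $k^{r-1}\times k^{r-1}$ subgrid whose branch sets all lie in one component of $G - N[\alpha']$ type set-up; the pigeonhole over which neighbour of $\alpha$ "serves" a given strip, combined with the grid's connectivity forcing large monochromatic regions, is what yields the smaller grid with smaller radius. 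Once a $k^{r-1}\times k^{r-1}$ grid minor survives in a graph of radius $r-1$ (after deleting $\alpha$ and one more vertex and restricting to a component), the induction hypothesis gives an $A$-model there, and re-expanding the deleted vertices along shortest paths to $\alpha$ supplies the branch set for $z$, since $z$ is the unique vertex of $A$ not in the planar part $A - z$. This final "re-expansion" step, verifying that the shortest-path tree from $\alpha$ can host $B_z$ while meeting all required neighbour-branch-sets without colliding with the grid branch sets, is the crux; everything else is bookkeeping on grid coordinates.
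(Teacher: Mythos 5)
The approach diverges from the paper in a way that creates a genuine gap. You carve the $k^r\times k^r$ grid into a $k\times k$ array of subgrids of size $k^{r-1}\times k^{r-1}$, and your inductive step hinges on the claim (which you flag as "the key claim to isolate") that some neighbour $\alpha'$ of $\alpha$ has the property that its $(r-1)$-ball in $G-\alpha$ contains one of these $k^{r-1}\times k^{r-1}$ subgrids in its entirety. There is no justification for this, and it is false in general: a $k^{r-1}\times k^{r-1}$ subgrid has on the order of $k^{2(r-1)}$ branch sets, $\alpha$ can have arbitrarily many neighbours, and the branch sets of a single subgrid can be "served" by pairwise distinct neighbours of $\alpha$ so that no one $\alpha'$-ball of radius $r-1$ covers a whole subgrid. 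Your appeal to pigeonhole over strips and to "the grid's connectivity forcing large monochromatic regions" does not produce such an $\alpha'$; there is no bound on the number of colours, so pigeonhole has no purchase. You have correctly identified where the difficulty sits, but you have not shown a way past it.

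The paper's proof sidesteps this by using the opposite partition: it cuts the $k^r\times k^r$ grid into $(k^{r-1})^2$ subgrids of size $k\times k$ each, so that contracting each small subgrid yields a $k^{r-1}\times k^{r-1}$ grid. With $V_i$ the distance-$i$ layer around $\alpha$, there are only two cases. If some $k\times k$ subgrid lies entirely in $V_r$, then $A-z$ is a minor of $G[V_r]$, and contracting $V_0\cup\dots\cup V_{r-1}$ to a single vertex $w$ gives the apex (every vertex of $V_r$ has a neighbour in $V_{r-1}$, so $w$ dominates $V_r$), hence $A$ is a minor of $G$, contradiction. Otherwise every $k\times k$ subgrid meets $V_1\cup\dots\cup V_{r-1}$; contracting each such subgrid to a vertex (and discarding leftover $V_r$ vertices) produces a minor $G'$ of $G$ in which $\alpha$ is still a centre of radius at most $r-1$ and $G'-\alpha$ contains a $k^{r-1}\times k^{r-1}$ grid minor, contradicting the inductive hypothesis. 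Notice the induction is applied to a \emph{minor} of $G$ obtained by contracting all the small subgrids simultaneously, not to a subgraph of $G-\alpha$ spanned by a single surviving subgrid. That contraction is exactly what lets the radius drop uniformly by one, which is the step your proposal lacks.

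In short: your base case and the overall shape (induction on $r$, reduce the grid exponent by one per level, build the apex from a shortest-path tree to $\alpha$) are the right instincts, but the specific decomposition you chose forces you to find a "good" subgrid, and no such subgrid need exist. Try the dual decomposition into $k\times k$-sized cells and contract all of them at once instead of trying to preserve one large cell.
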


\begin{proof}
We proceed by induction on $r$. In the $r=0$ case, $V(G-\alpha)=\emptyset$ and the result holds. Now consider the radius $r$ case, and assume the radius $r-1$ case holds. Let $V_i:=\{x\in V(G):\dist_G(x,\alpha)=i\}$ for $i\in\{0,\dots,r\}$. So $V_0=\{\alpha\}$ and $V_0\cup\dots\cup V_r=V(G)$. 

Suppose for the sake of contradiction that $G-\alpha$ has a $k^r\times k^r$ grid minor. Partition this $k^r\times k^r$ grid into $k\times k$ subgrids, so that contracting each subgrid to a vertex gives a $k^{r-1}\times k^{r-1}$ grid. 

First suppose that some $k\times k$ subgrid is contained in $V_r$. Thus $H$ is a minor of $G[V_r]$. By construction, $G[V_0\cup \dots\cup V_{r-1}]$ is connected. Let $G'$ be obtained from $G$ by contracting $V_0\cup \dots\cup V_{r-1}$ to a vertex $w$. Since every vertex in $V_r$ has a neighbour in $V_{r-1}$,  every vertex in $G'$ is at distance at most $1$ from $w$.  Since $H$ is a minor of  $G[V_r]$, $A$ is a minor of $G'$ and thus of $G$, which is a contradiction. 

Now assume that every $k\times k$ subgrid intersects $V_1\cup\dots\cup V_{r-1}$. Let $G'$ be obtained from $G$ by contracting each subgrid to a vertex, and deleting any vertices in $V_r$ not in one of the subgrids. So every vertex in $G'$ is at distance at most $r-1$ from $\alpha$, and $G'-\alpha$ contains a $k^{r-1}\times k^{r-1}$ grid minor. Since $G'$ is a minor of $G$, $G'$ is $A$-minor-free. Hence $G'$ contradicts the assumed truth of the $r-1$ case. 

Therefore,   $G-\alpha$ has no $k^r\times k^r$ grid minor.
\end{proof}

\cref{PlanarGrid,key} imply that for every apex graph $A$ with $t$ vertices, every $A$-minor-free graph $G$ with radius $r$ has no $(2t-2)^{r}\times (2t-2)^{r}$ grid minor. Equation~\cref{GMT} implies that $\tw(G) \in O^\ast( (2t)^{9r})$. This completes our simple proof of the Apex Minor Theorem by \citet{Eppstein-Algo00} (\cref{ApexMinorTheorem}).

%%%%%%%%%%%%%%%%%%%%%%%%
\section{Polynomial Upper Bound}
\label{ProbProof}

This section proves \cref{PolyApexMinorTheorem} from \cref{Intro}. We need the following straightforward lemma.

\begin{lem}
\label{DoublingTrick}
If a graph $G$ is a minor of the $k\times \ell$ grid, then there is an $H$-model $(B_u:u\in V(H))$ in the $2k\times 2\ell$ grid $J$, such that for each vertex $u\in V(H)$ there is a vertex $h_u$ in $B_u$ incident to no edge of $J$ representing an edge of $H$. 
\end{lem}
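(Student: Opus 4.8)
The plan is to realise the desired model in a suitable blow-up of the $k\times\ell$ grid. Index the $k\times\ell$ grid by $\{1,\dots,k\}\times\{1,\dots,\ell\}$ and index $J$ by $\{1,\dots,2k\}\times\{1,\dots,2\ell\}$, and for each vertex $(i,j)$ of the $k\times\ell$ grid set $Q_{i,j}:=\{2i-1,2i\}\times\{2j-1,2j\}$. Then the sets $Q_{i,j}$ partition $V(J)$, each $Q_{i,j}$ induces a $4$-cycle in $J$, and for each edge $(i,j)(i',j')$ of the $k\times\ell$ grid there are exactly two edges of $J$ with one endpoint in $Q_{i,j}$ and one in $Q_{i',j'}$.

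First I would fix an $H$-model $(B'_u:u\in V(H))$ in the $k\times\ell$ grid (which exists since $H$ is a minor of it), and define $B_u$ to be the subgraph of $J$ induced by $\bigcup\{Q_{i,j}:(i,j)\in V(B'_u)\}$. Pairwise disjointness of the $B_u$ is immediate from that of the $B'_u$, and connectivity of each $B_u$ follows since $B'_u$ is connected, each block induces a connected subgraph, and any two adjacent blocks are joined by an edge of $J$. To choose the representing edges, for every edge $uv\in E(H)$ whose representative in the original model joins $(i,j)\in V(B'_u)$ to $(i',j')\in V(B'_v)$, let $f_{uv}$ be the edge of $J$ between $Q_{i,j}$ and $Q_{i',j'}$ selected by the fixed rule ``take the one with smaller second coordinate'' when $(i,j)(i',j')$ is horizontal and ``take the one with smaller first coordinate'' when it is vertical. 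Each $f_{uv}$ then has one endpoint in $B_u$ and one in $B_v$, so $(B_u:u\in V(H))$ together with the edges $f_{uv}$ is an $H$-model in $J$.

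It then remains to produce the vertices $h_u$. For each $u$ I would pick any $(i,j)\in V(B'_u)$ and set $h_u:=(2i,2j)\in V(B_u)$. Since the branch sets $B_w$ are unions of blocks and are pairwise disjoint, every representing edge runs between two distinct blocks, so it suffices to check that for each block $Q_{a,b}$ the selected edges of $J$ leaving $Q_{a,b}$ meet $Q_{a,b}$ only in $\{(2a-1,2b-1),(2a,2b-1),(2a-1,2b)\}$, and never in $(2a,2b)$. This is a short case analysis over the four directions (left, right, up, down) against the fixed selection rule. Consequently no representing edge is incident to $h_u=(2i,2j)$, as required.

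The argument is essentially bookkeeping; the only genuine subtlety is the need for a single, global rule for choosing representatives: a choice made separately for each branch set could conflict on an edge whose representative joins the ``designated'' blocks of both its branch sets. Committing to the lower/left edge resolves this, because it makes the common corner $(2i,2j)$ of every block simultaneously free of all representing edges.
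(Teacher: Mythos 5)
Your proof is correct and follows essentially the same route as the paper: blow up each grid vertex to a $2\times 2$ block, take the union of blocks as the branch set, and commit to a global parity-based rule for selecting the representing edge so that one fixed corner of every block is never an endpoint. The only cosmetic difference is the choice of parity (you reserve the $(2i,2j)$ corner by always picking the lower/left crossing edge, whereas the paper reserves $(2i-1,2j-1)$ by always picking the higher/right one); both work for the same reason.
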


\begin{proof}
By assumption, there is an $H$-model $\phi=(X_u:u\in V(H))$ in the $k\times \ell$ grid. We now define 
an $H$-model $\phi'=(B_u:u\in V(H))$ in the $2k\times 2\ell$ grid. For each $u\in V(H)$, let $B_u:=\{ (2x,2y),(2x-1,2y),(2x,2y-1),(2x-1,2y-1): (x,y)\in V(X_u) \}$. For distinct $u,v\in V(H)$, since $X_u$ and $X_v$ are disjoint, $B_u$ and $B_v$ are disjoint. Consider an edge $uv\in E(H)$. If $uv$ is represented by a horizontal edge $(x,y)(x+1,y)$ with respect to $\phi$, then represent $uv$ by the edge $(2x,2y)(2x+1,2y)$ with respect to $\phi'$. If $uv$ is represented by a vertical edge $(x,y)(x,y+1)$ with respect to $\phi$, then represent $uv$ by the edge $(2x,2y)(2x,2y+1)$ with respect to $\phi'$. For each vertex $u\in V(H)$, choose one vertex $(x,y)$ in $X_u$ and let $h_u:=(2x-1,2y-1)$, which is in $B_u$. Every vertex of $J$ that is incident to a representing edge has at least one even coordinate, so $h_u$ is incident to no edge of $J$ representing an edge of $H$.
\end{proof}

The next lemma is the heart of the proof of \cref{PolyApexMinorTheorem}.

\begin{lem}
\label{ApexGridMinor}
Let $A$ be an apex graph, where $H:=A-z$ is planar for some $z\in V(A)$ with $d:=\deg_A(z)$. Assume that $H$ is a minor of the $k\times \ell$ grid. Let $G$ be an $A$-minor-free graph with radius $r$. Then $G$ has no $(2kn+1)\times(2\ell n+1)$ grid minor, where $n:=4(r-1)d+1$. 
\end{lem}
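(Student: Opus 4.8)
The plan is to argue by induction on $r$, converting the multiplicative blow‑up of \cref{key} into an additive one: writing $n=n_r$, the recursion $n_r=n_{r-1}+4d$ with $n_1=1$ is exactly what the claimed bound records, so each unit decrease of the radius should cost only $4d$ in the grid‑dimension parameter. Throughout, fix a centre $\alpha$ of $G$ and the BFS layers $V_0=\{\alpha\},V_1,\dots,V_r$ around $\alpha$; each ball $V_{\le i}:=V_0\cup\dots\cup V_i$ is connected, and every vertex of $V_i$ ($i\ge 1$) has a neighbour in $V_{i-1}$.

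\smallskip
\emph{Base case $r=1$.} Here $n=1$ and $\alpha$ is dominant. Suppose $G$ has a $(2k+1)\times(2\ell+1)$ grid minor. If some branch set contains $\alpha$, shrink it to $\{\alpha\}$ (still connected, and, as $\alpha$ is dominant, still adjacent to every other branch set), so the grid minor survives with the branch set of at most one grid vertex equal to $\{\alpha\}$. A short computation shows the $k\times\ell$ grid is a subgraph of the $(2k+1)\times(2\ell+1)$ grid minus any one vertex (a single point splits a path on $2k+1$ vertices into two parts, one of size at least $k$); since $H$ is a minor of the $k\times\ell$ grid, $H$ is a minor of $G-\alpha$, and adjoining $\alpha$ as the branch set of $z$ yields an $A$‑minor of $G$, a contradiction.

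\smallskip
\emph{Inductive step.} Suppose $G$ is $A$‑minor‑free with radius $r\ge 2$ and has a $(2kn+1)\times(2\ell n+1)$ grid minor $\Gamma$, with $n=n_r=n_{r-1}+4d$; let $(X_p:p\in V(\Gamma))$ be its branch sets in $G$, and for a grid vertex $p$ let $\operatorname{depth}(p)$ be the least $i$ with $X_p\cap V_i\neq\emptyset$. Since $X_p$ is connected and every edge of $G$ stays within a layer or joins consecutive ones, the layers met by $X_p$ form an interval. Partition $\Gamma$ into an $n\times n$ array of blocks, each an axis‑aligned $(2k+1)\times(2\ell+1)$ subgrid, consecutive blocks sharing a boundary line; by \cref{DoublingTrick} each block hosts an $H$‑model with a free vertex $h_u$ in each branch set $B_u$, in particular in each of the $d$ branch sets $B_{u_1},\dots,B_{u_d}$ indexed by $N_A(z)$, and these docking points occupy fixed relative positions within a block. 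The aim is to find a block $Q$ and an integer $m$ with (i) no branch set of $Q$'s $H$‑model meets $V_{<m}$, and (ii) each of $B_{u_1},\dots,B_{u_d}$ meets $V_m$. Given such $Q$ and $m$, set $B_z:=V_{\le m-1}$: it is connected, disjoint from the $H$‑model by (i), and adjacent to every $B_{u_j}$ by (ii) (a vertex of $B_{u_j}$ in $V_m$ has a neighbour in $V_{m-1}\subseteq B_z$); hence $A=H+z$ is a minor of $G$, a contradiction. This is the analogue of the first case of \cref{key}, now applied inside a single block rather than to an entire layer.

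\smallskip
To locate such $Q$ and $m$, analyse the ``deep'' part of $\Gamma$ level by level: for each $i$, the branch sets meeting $V_{\le i-1}$ together with the connected ball $V_{\le i-1}$ form a connected subgraph disjoint from every branch set of depth $\ge i$, so contracting $V_{\le i-1}$ gives an $A$‑minor‑free graph of radius $\le r-i+1$ carrying $\Gamma$ with its shallow vertices (depth $<i$) identified to one vertex. Running $i$ downward and applying the inductive hypothesis at radius $r-1$ (for instance to $G[V_{\le r-1}]$, or after contracting $V_{\le 1}$) forces, at each level, that the shallow vertices are spread through the block array densely enough that a counting argument over the at most $r$ relevant levels, set against the $n=4(r-1)d+1$ block‑rows, isolates a level $m$ and a block $Q$ realising (i) and (ii). The main obstacle is exactly this last step: converting the level‑by‑level information about where shallow branch sets sit into a single block whose \emph{entire} $H$‑model lies in layers $\ge m$ while all $d$ prescribed branch sets touch layer $m$. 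This is where \cref{DoublingTrick}'s free vertices and the extra row and column of the $(2k+1)\times(2\ell+1)$ block — beyond the bare $2k\times2\ell$ needed to carry $H$ — are used to reroute a few offending branch sets, and where the factor $4d$ in $n$ is consumed; the rest is bookkeeping around the induction and, for the treewidth consequence, equation~\cref{GMT}.
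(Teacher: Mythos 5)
Your proposal does not prove the lemma; it sketches a strategy and then openly names the crucial step — locating a block $Q$ and a level $m$ satisfying (i) and (ii) — as ``the main obstacle'' without carrying it out. That step is not mere bookkeeping. Condition (ii) asks that \emph{all} $d$ branch sets $B_{u_1},\dots,B_{u_d}$ meet the \emph{same} minimal layer $V_m$, and there is no reason this should hold in any block: the $H$-model of a block can sit in layers $[m,r]$ with the $B_{u_j}$'s reaching no shallower than $V_{m+2}$, say, so $B_z:=V_{\le m-1}$ would fail to be adjacent to them. You gesture at ``rerouting a few offending branch sets'' using the free vertex of \cref{DoublingTrick} and the one spare row and column, but $d$ can be as large as $|V(A)|-1$ and a single spare row does not give enough room to reroute $d$ branch sets down to a common layer; no argument is offered that it does. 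The inductive frame (additive recursion $n_r=n_{r-1}+4d$) is also asserted rather than derived: you never show how the hypothesis at radius $r-1$, applied to $G[V_{\le r-1}]$ or to a contraction of $V_{\le 1}$, yields a quantitative ``density of shallow vertices'' statement that a counting argument could use.

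The paper's actual proof avoids asking for a single separating layer entirely. It contracts a grid model to a literal grid subgraph $L$ of a minor $G'$ of $G$, partitions $L$ into $n\times n$ blocks, and — this is the core idea you are missing — chooses a \emph{random sparse} $H$-model inside $L$: in each block $S_{a,b}$ it picks a single random cross $X_{a,b,m_{a,b}}$ plus a few random bars, so that any fixed vertex of $L$ lies in the model with probability at most $4/n$. Separately, each ``docking'' vertex $h'_u$ (for $u\in N_A(z)$) comes with a short path $P_{h'_u}$ of at most $r-1$ internal vertices to $\alpha$. A union bound over the at most $dn(r-1)$ pairs $(v,x)$ with $v$ a candidate docking vertex and $x\in I_v$ shows that with positive probability none of these internal vertices lands in the random $H$-model, using exactly $n>4(r-1)d$. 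Then $C_z:=\bigcup_u P'_u$ (the paths with $h'_u$ removed) is disjoint from the $H$-model and meets every $B_{u}$, producing an $A$-minor in $G'$ and the contradiction. So $B_z$ is not a ball of layers but a union of $d$ short paths to the centre, and the constant $4d$ in $n$ is spent on making the model sparse enough for these paths to thread through — not on rerouting branch sets to a common layer.
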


\begin{proof}
By \cref{DoublingTrick}, there is an $H$-model $(B_u:u\in V(H))$ in the $2k\times 2\ell$ grid $J$, such that for each vertex $u\in V(H)$ there is a vertex $h_u$ in $B_u$ incident to no edge of $J$ representing an edge of $H$. 

Let $\alpha$ be a centre of $G$. 
Suppose for the sake of contradiction that $G$ has a $(2kn+1)\times(2\ell n+1)$ grid minor. 
So $G-\alpha$ has a $2kn\times 2\ell n$ grid model $\{X_{i,j}: 1\leq i\leq 2kn, 1\leq j\leq 2\ell n\}$. 
Let $G'$ be obtained from $G$ by contracting $X_{i,j}$ to a vertex denoted $(i,j)$ (for all $i,j$). 
So the $2kn\times 2\ell n$ grid is a subgraph $L$ of $G'$ with vertex-set $\{1,\dots,2kn\}\times\{1,\dots,2\ell n\}$. 
	
Since contractions do not increase distances, $G'$ has radius at most $r$ with centre $\alpha$. 
So for each vertex $x\in V(G')$ there is a path $P_x$ in $G'$ from $x$ to $\alpha$ with at most $r-1$ internal vertices. Let $I_x$ be the set of internal vertices of $P_x$, which may be empty if $x\alpha\in E(G')$. 
	
We now partition $L$ into $n\times n$ subgrids, so that contracting each subgrid to a vertex gives a $2k \times 2\ell$ grid. Fix $a\in\{1,\dots,2k\}$ and $b\in\{1,\dots,2\ell\}$. For $p,q\in\{1,\dots,n\}$, 
let $(a,b,p,q)$ denote the vertex $((a-1)n+p,(b-1)n+q)$. So $\{(a,b,p,q): p,q\in\{1,\dots,n\} \}$ induces an $n \times n$ subgrid \defn{$S_{a,b}$}, said to \defn{belong} to the vertex $u\in V(H)$ such that $(a,b)\in V(B_u)$ (if such a $u$ exists). For $i\in\{1,\dots,n\}$, let $X^{\rightarrow}_{a,b,i}:=\{ (a,b,p,i): p\in\{1,\dots,n\}\}$, called a \defn{horizontal bar} in $S_{a,b}$; let $X^{\uparrow}_{a,b,i}:=\{ (a,b,i,p): p\in\{1,\dots,n\}\}$, called a \defn{vertical bar} in $S_{a,b}$; and let $X_{a,b,i}:=X^{\rightarrow}_{a,b,i} \cup X^{\uparrow}_{a,b,i}$, called a \defn{cross} in $S_{a,b}$.  
Note that any horizontal bar in $S_{a,b}$ intersects any vertical bar in $S_{a,b}$.

Let $A$ be the set of all pairs $(v,x)$ such that for some $u\in N_A(z)$ with $h_u=(a,b)$ and for some $i\in\{1,\dots,n\}$, we have $v=(a,b,i,i)$ and $x\in I_v$. Note that $|A|\leq dn(r-1)$. 

For  $a\in\{1,\dots,2k\}$ and $b\in\{1,\dots,2\ell\}$, let $m_{a,b}$ be a uniformly random element of $\{1,\dots,n\}$, chosen independently.
For $u\in V(H)$, if $h_u=(a,b)$ then let $h'_u$ be the vertex $(a,b,m_{a,b},m_{a,b})$, which is in $X_{a,b,m_{a,b}}$.
As illustrated in \cref{FindApex}, for $u\in V(H)$, let 
$C_u$ be the subgraph of $G'$ induced by the union of the following sets: 
\begin{align*}
     \{ X_{a,b,m_{a,b}} &: (a,b) \in V(B_u)\} \\
 \{  X^{\rightarrow}_{a,b,m_{a+1,b}}  &: (a,b)(a+1,b) \in E(B_u) \}\\
 \{  X^{\uparrow}_{a,b,m_{a,b+1}}  &: (a,b)(a,b+1) \in E(B_u) \}\\
 \{  X^{\rightarrow}_{a,b,m_{a+1,b}}  &: (a,b)(a+1,b)\text{ represents some edge }uv\text{ incident to }u\text{ with }(a+1,b)\in B_v\}\\ 
\{  X^{\uparrow}_{a,b,m_{a,b+1}}  &: (a,b)(a,b+1)\text{ represents some edge }uv\text{ incident to }u\text{ with }(a,b+1)\in B_v\}.
\end{align*}

\begin{figure}
\includegraphics{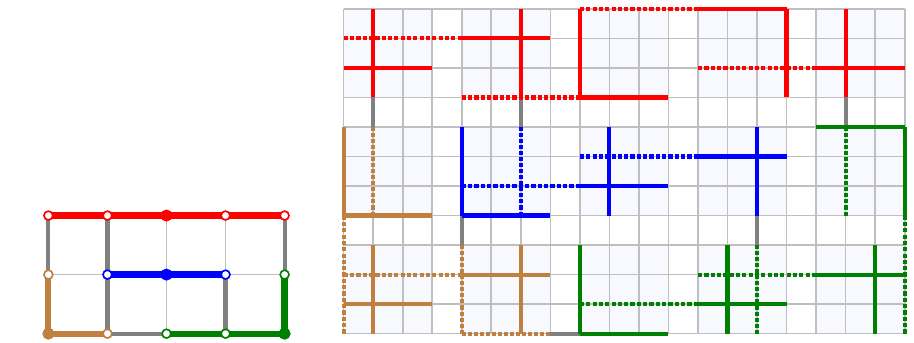}
\caption{Example with $H=K_4$: (left) the model $\{B_u:u\in V(H)\}$ in a grid, highlighting the vertex $h_u$, (right) 
the model $\{C_u:u\in V(H)\}$ in a larger grid, where dashed edges are bars arising from neighbouring blocks. }
\label{FindApex}
\end{figure}

We now show that $C_u$ is connected for each vertex $u\in V(H)$. For each $(a,b)\in V(B_u)$, $G'[V(C_u)\cap S_{a,b}]$ is connected, since it consists of one cross plus some number of vertical or horizontal bars. For an edge $(a,b)(a+1,b)\in E(B_u)$, the vertex $(a,b,n,m_{a+1,b})\in S_{a,b}$ is adjacent to the vertex $(a+1,b,1,m_{a+1,b})\in S_{a+1,b}$. Similarly, for an  edge $(a,b)(a,b+1)\in E(B_u)$, the vertex $(a,b,m_{a+1,b},n)\in S_{a,b}$ is adjacent to the vertex $(a,b+1,m_{a+1,b},1)\in S_{a,b+1}$. Since $B_u$ is connected, so is $C_u$. Every subgrid that $C_u$ intersects belongs to $u$. So $C_u $ and $C_v$ are disjoint for distinct $u,v\in V(H)$. We now show that 
$C_u$ is adjacent to $C_v$ for each edge $uv\in E(H)$. If $(a,b)(a+1,b)$ is the edge of $J$ representing $uv$, then  $(a,b,n,m_{a+1,b})\in V(C_u)$ is adjacent to  $(a+1,b,1,m_{a+1,b})\in V(C_v)$. Similarly, if $(a,b)(a,b+1)$ is the edge of $J$ representing $uv$, then  $(a,b,m_{a,b+1})\in V(C_u)$ is adjacent to  $(a,b+1,m_{a,b+1},1)\in V(C_v)$. Hence, $C_u$ is adjacent to $C_v$. Therefore, $(C_u:u\in V(H))$ is an $H$-model in $L$. 

Consider a vertex $x=(a,b,p,q) \in V(L)$, and consider the probability that $x$ is in $\bigcup\{C_u:u\in V(H)\}$. 
For this to happen, it must be that $(a,b)\in V(B_u)$ for some $u\in V(H)$, and there is only one such vertex $u$. 
Moreover, if $x\in V(C_u)$ then $x\in 
X^{\rightarrow}_{a,b,m_{a,b}} \cup 
X^{\uparrow}_{a,b,m_{a,b}} \cup 
X^{\rightarrow}_{a,b,m_{a+1,b}} \cup
X^{\uparrow}_{a,b,m_{a,b+1}}$. 
By construction, 
$\mathbb{P}(x\in X^{\rightarrow}_{a,b,m_{a,b}})=\mathbb{P}(x\in X^{\uparrow}_{a,b,m_{a,b}})=\mathbb{P}(x\in X^{\rightarrow}_{a,b,m_{a+1,b}})=\mathbb{P}(x\in X^{\uparrow}_{a,b,m_{a,b+1}})=\frac{1}{n}$. 
By the union bound, $x$ is in $V(C_u)$, and thus in 
$\bigcup\{C_u:u\in V(H)\}$,  with probability at most $\frac{4}{n}$. 

Consider a pair $(v,x)\in A$. So 
$v=(a,b,i,i)$ and $x\in I_v$, for some $u\in N_A(z)$ with $h_u=(a,b)$ and for some $i\in\{1,\dots,n\}$. Say $(v,x)$ is \defn{bad} if $i=m_{a,b}$ and $x\in C_w$ for some vertex $w\in V(H-u)$. 
The probability that $i=m_{a,b}$ equals $\frac1n$. 
The probability that $x\in C_w$ for some vertex $w\in V(H-u)$ is at most $\frac{4}{n}$. 
Moreover, these events are independent since $w\neq u$ and $h_u$ is incident to no edge of $J$ representing an edge of $H$ incident to $u$. In particular, $uw$ is not an edge of $H$ represented by the edge of $L$ between $(a,b)$
and $(a',b')$ where $x\in S_{a',b'}$. 
Thus, the probability that $(v,x)$ is bad is at most $\frac{4}{n^2}$. 
By the union bound, the probability that some pair in $A$ is bad is at most $\frac{4|A|}{n^2}\leq \frac{4(r-1)dn}{n^2} = \frac{4(r-1)d}{n}<1$. 
Hence, with positive probability, no pair in $A$ is bad. 
Therefore, there exists $(m_{a,b}: a\in\{1,\dots,2k\},b\in\{1,\dots,2\ell\})$ such that no pair in $A$ is bad. Let $(C_u:u\in V(H))$ be the $H$-model in $L$ defined by  $(m_{a,b}:
 a\in\{1,\dots,2k\}, b\in\{1,\dots,2\ell\})$.
 
For each $u\in N_A(z)$, let $P'_u$ be the path of $P_{h'_u}-h'_u$, and note that $P'_u$ contains $\alpha$ and some neighbour of $h'_u$. Since no pair in $A$ is bad, $P'_u$ avoids $\bigcup\{C_w:w\in V(H)\}$. 
Thus we can extend $(C_u:u\in V(H))$ to a model of $A$ in $G'$ by setting $C_z:=\bigcup\{P'_u:u\in N_A(z)\}$, which is the desired contradiction. 
\end{proof}

\cref{PlanarGrid} says that \cref{ApexGridMinor} is applicable with $k=\ell\leq 2|V(A)|-2$, which with \eqref{GMT} implies \cref{PolyApexMinorTheorem}.

%%%%%%%%%%%%%%%%%%%%%%%%%%%%%%%%%%%%%%%%%%%%%%%
\section{\boldmath $K_{3,t}$-Minor-Free Graphs}

This section proves upper bounds on the size of grid minors in  $K_{3,t}$-minor-free graphs with given radius. 
It is easily seen that $K_{2,t}$ is a minor of a $O(\sqrt{t})\times O(\sqrt{t})$ grid. So by 
\cref{ApexGridMinor}, every $K_{3,t}$-minor-free graph with radius $r$ has no $O(rt^{3/2}) \times O(rt^{3/2})$ grid minor. We can do better, as follows.

\begin{lem}
\label{K3tGridRadius}
Let $G$ be a graph with radius $r\geq 1$ and centre $\alpha$. If $G-\alpha$ has an $n\times m$ grid minor, then $K_{3,t}$ is a minor of $G$ for some
$$t\geq \frac{(n-4r+2)(m-4r+2)}{8r(2r-1)} .$$
\end{lem}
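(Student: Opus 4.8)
First I would realise the grid minor as an honest subgraph. Since $G-\alpha$ has an $n\times m$ grid minor, there is a model $\{X_{i,j}\}$ of pairwise disjoint connected subgraphs of $G-\alpha$; contracting each $X_{i,j}$ (but not $\alpha$) yields a minor $G'$ of $G$ that contains the $n\times m$ grid $L$ as a subgraph, that still has radius at most $r$, and that still has $\alpha$ as a centre. Hence every $v\in V(G')$ is joined to $\alpha$ by a shortest path $P_v$ with at most $r-1$ internal vertices, along which the distance to $\alpha$ strictly decreases. It then suffices to exhibit a $K_{3,t}$-model in $G'$ for the claimed value of $t$.

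The model I would aim for has hubs $Q_1:=\{\alpha\}$ and two disjoint connected subgraphs $Q_2,Q_3\subseteq L$, together with $t$ disjoint ``spokes'', each adjacent to all of $Q_1,Q_2,Q_3$. Discard the outermost $2r-1$ rows and $2r-1$ columns of $L$, leaving a central $(n-4r+2)\times(m-4r+2)$ subgrid, and tile this subgrid by blocks of area about $8r(2r-1)$, so there are about $\tfrac{(n-4r+2)(m-4r+2)}{8r(2r-1)}$ blocks; each surviving block will contribute one spoke. Take $Q_2,Q_3$ to be two disjoint connected ``combs'' in $L$ — each a spine along part of the boundary together with teeth reaching into the blocks — arranged so that every block contains a vertex adjacent to $Q_2$ and a vertex adjacent to $Q_3$. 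Within a block, the spoke is a short connected subgraph of $L$ joining such a ``$Q_2$-vertex'' and ``$Q_3$-vertex'' through a chosen anchor $v$, together with the tail $P_v-\alpha$ (which contains a neighbour of $\alpha$, so the spoke meets $Q_1$). I would take $v$ to be the vertex of the block nearest $\alpha$, so that the first vertex of $P_v$ leaves the block, ruling out the most immediate self-collision. If the blocks, the tails, and the combs can be made pairwise disjoint, assembling everything produces the required $K_{3,t}$-model in $G'$, hence in $G$.

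The crux, and the step I expect to be the main obstacle, is precisely this disjointness: the tails $P_v-\alpha$ are paths in $G'$, not in $L$, so they may leave the grid and re-enter far away, and so a priori collide with other blocks' spokes or with $Q_2\cup Q_3$. The shaving by $2r-1$ and the block dimensions $\gtrsim 2r$ create slack relative to the tail length $\le r-1$, but it must be spent carefully; I see two ways to finish. (i) A deletion argument: each tail has at most $r-1$ internal vertices, hence meets at most $r-1$ other blocks, so the conflict graph on the blocks has at most $(r-1)$ edges per vertex on average and thus an independent set on a $\tfrac{1}{2r-1}$-fraction of the blocks; a further constant-fraction deletion removes blocks whose tail meets $Q_2\cup Q_3$, leaving at least $t$ blocks — and this is where the $8r(2r-1)$ in the denominator is forced. (ii) A randomisation argument: choose the anchor $v$ uniformly and independently inside each block, and show by a union bound over the at most $r-1$ tail-vertices that with positive probability no collision occurs, exactly in the spirit of the proof of \cref{ApexGridMinor}. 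Either way, controlling the $\alpha$-paths $P_v$ — which can wander arbitrarily through $G'$ — is the delicate part.
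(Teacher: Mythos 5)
Your high-level architecture — contract the model to get the grid $L$ as a subgraph of a minor $G'$ with $\alpha$ still a centre, then build a $K_{3,t}$-model with two hubs inside $L$, a third hub containing $\alpha$, and spokes anchored at grid points joined to $\alpha$ by short tails $P_v-\alpha$, with the obstruction being collisions between tails, spokes, and combs — is the right picture, and both of your proposed finishes contain genuine pieces of the actual argument. But the sketch has two substantive gaps, both of which the paper resolves with a different allocation of the pieces into branch sets and a different randomisation.

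\textbf{Tails must go into the $\alpha$-hub, not the spokes.} You set $Q_1:=\{\alpha\}$ and make each spoke $=$ (block path) $\cup$ (tail $P_v-\alpha$). This forces the tails to be \emph{pairwise} disjoint. Two tails are paths in $G'$, not in $L$; they may share an internal vertex that lies off the grid, and nothing in either of your finishes controls this. Your conflict graph in (i) records when a tail meets another \emph{block}, so a collision between two tails at an off-grid vertex is invisible to it; and in (ii) a union bound over all $\binom{t}{2}$ pairs of tails is far too weak for the target bound. The paper avoids this entirely by putting $\alpha$ \emph{and all the tails} into one branch set $R:=\{\alpha\}\cup\bigcup_{x} I_x$, so the tails are free to overlap one another; the spoke is then only the horizontal segment $Z_x$ inside $L$, and one only needs the tails to avoid the two combs and the other spokes, which lie in $L$.

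\textbf{Fixed combs cannot be protected by deletion; the paper randomises the comb columns, not the anchors.} In (i) your combs $Q_2,Q_3$ are fixed, and you delete the anchors whose tail hits $Q_2\cup Q_3$, asserting this is a ``further constant-fraction deletion.'' There is no such bound: with fixed comb columns, every tail could pass through a comb tooth. The paper instead places the comb columns \emph{at random within disjoint windows of width $2r-1$}, so each fixed tail vertex lands in a comb column with probability at most $\tfrac1{2r-1}$, and a union bound over the $\le r-1$ internal tail vertices gives failure probability $<\tfrac12$; linearity of expectation then keeps half the anchors. Your (ii) randomises the \emph{anchors} instead, in the spirit of \cref{ApexGridMinor}; but that lemma only needs to protect $d=\deg_A(z)$ anchors, so a union bound over $O(dn(r-1))$ bad pairs can be pushed below $1$. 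Here the number of anchors is $\Theta(t)$ and is exactly the quantity being maximised, so no union bound over pairs can close. After the comb-randomisation step the paper then runs your (i)-style argument: a directed conflict graph on the surviving anchors with out-degree $\le r-1$ (an edge when $I_x$ meets $Z_y$), Tur\'an to extract a $\tfrac1{2r-1}$-fraction independent set, and a final averaging step over $2r-1$ boundary-row pairs to pick one safe row for closing up the combs. Reorganising your plan along these lines — tails into $R$, random comb columns, then Tur\'an — turns it into the paper's proof.
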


\begin{proof}
Let   $(X_{i,j}:1\leq i\leq n, 1\leq j \leq m)$ be a model of the $n\times m$ grid minor in $G-\alpha$. 
Let $G'$ be the graph obtained from $G$ by contracting $X_{i,j}$ to a vertex $(i,j)$ (for all $i,j$). 
So $G'-\alpha$ has an $n\times m$ grid subgraph $J$ with vertex-set 
$\{1,\dots,n\}\times\{1,\dots,m\}$. 

For $j\in\{1,\dots,m\}$, let $R_j:= \{ (i,j) \in V(J) : 1\leq i\leq n\}$ be the $j$-th row in $J$. 
For $i\in\{1,\dots,n\}$, let $C_i:= \{ (i,j) \in V(J) : 1\leq j\leq m\}$ be the $i$-th column in $J$. 

Since contractions do not increase distances, $G'$ has radius at most $r$ with centre $\alpha$. 
So for each vertex $x\in V(G')$ there is a path $P_x$ in $G'$ from $x$ to $\alpha$ with at most $r-1$ internal vertices. Let $I_x$ be the set of internal vertices of $P_x$, which may be empty if $x\alpha\in E(G')$.

Let $s:=2r-1$. We may assume that $m>2s$ and $n>2s$, otherwise the claim is trivial. Let $p:= \floor{(n-2r+1)/2r)}$, so $p\geq 1$ and $n \geq 2rp+(2r-1)$.

Let $A:=\{ (2ir,j) \in V(J): 1\leq i\leq p, s+1\leq j\leq m-s\}$. So $|A|=p(m-2s)$. 

For $i\in\{1,\dots,p+1\}$, 
let $a_i$ be a random element in $\{2(i-1)r+1,\dots,2ir-1\}$, where each element is chosen with probability $\frac{1}{2r-1}$. 
Let 
$X:=\bigcup\{C_{a_i}: \text{odd }i\in\{1,\dots,p+1\} \}$ and
$Y:=\bigcup\{C_{a_i}: \text{even }i\in\{1,\dots,p+1\} \}$.
So $X\cup Y$ is a set of pairwise disjoint columns in $J$, alternating between $X$ and $Y$, as illustrated in \cref{FindK3t}.

\begin{figure}[!t]
\centering
\includegraphics{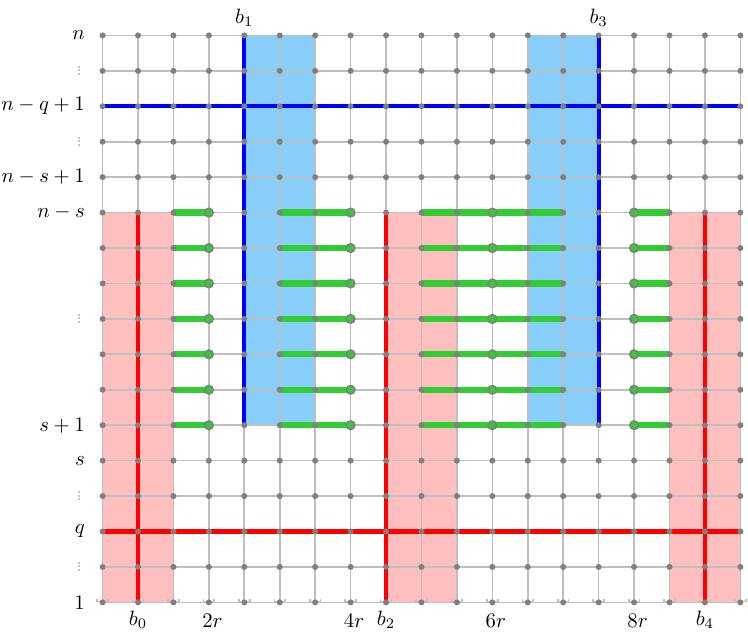}
\caption{Proof of \cref{K3tGridRadius}: the subgraphs $X'$ and $Y'$ are shown in red and blue; the vertical paths in $X'\cup Y'$ are chosen randomly from within each shaded block; and the vertices $x\in A'''$ are green, enlarged into the paths $Z_x$.}
\label{FindK3t}
\end{figure}

Let $A':= \{ x \in A: I_x \cap (X\cup Y)=\emptyset\}$. 
We now estimate $|A'|$. 
Consider $x\in A$.
Let $I_x$ be 1 if $x\in A$ and 0 otherwise. 
For each $y\in P_x$, the column containing $y$ is added to $X\cup Y$ with probability at most $\frac{1}{2r-1}$, so $\mathbb{P}(y\in X\cup Y)\leq\frac{1}{2r-1}$.
Since $|P_x|\leq r-1$, by the union bound, 
$\mathbb{P}(P_x \cap (X\cup Y)\neq\emptyset ) \leq\frac{r-1}{2r-1}<\frac12$.
So $\mathbb{P}(x \in A') >\frac12$, and
$\mathbb{E}(I_x)>\frac12$.
By definition, $|A'|=\sum_{x\in A}I_x$. 
By linearity of expectation, 
$\mathbb{E}(|A'|)=\sum_{x\in A}\mathbb{E}(I_x)>\sum_{x\in A}\frac12 = \frac12|A|$.
So there exists $a_1,\dots,a_{p+1}$ such that $|A'|>\frac12|A|$. 

For each $x\in A'$ let $Z_x$ be the maximal horizontal path in $J-(X\cup Y)$ including $x$. By the definition of $X$ and $Y$, the endpoints of $Z_x$ are adjacent to $X$ and $Y$. 

Consider the digraph $Q$ with $V(Q):=A'$, where $(x,y)\in E(Q)$ if $I_x\cap Z_y\neq\emptyset$. By construction, 
$Z_x\cap Z_y=\emptyset$ for distinct $x,y\in A'$. Thus $\deg^+_Q(x)\leq |I_x|\leq r-1$. Hence $|E(Q)|\leq(r-1)|V(Q)|$. Let $\overline{Q}$ be the undirected graph underlying $Q$. So $\overline{Q}$ has average degree $2|E(\overline{Q})|/|V(\overline{Q})|\leq 2|E(Q)|/|V(Q)|\leq 2(r-1)$. By Tur\'an's Theorem, $\overline{Q}$ has an independent set $A''$ with $|A''|\geq \frac{1}{2r-1}|A'|\geq 
\frac{1}{4r-2}|A|$. For distinct $x,y\in A''$, we have  
$I_x\cap Z_y=\emptyset$ and $I_y\cap Z_x=\emptyset$.

For each $j\in\{1,\dots,s\}$, 
let $L_j$ be the set of vertices $x\in A''$ such that 
$I_x\cap (R_j\cup R_{n-j+1})\neq\emptyset$. 
Since $|I_x|\leq r-1$, we have 
$\sum_{j=1}^s|L_j|\leq (r-1)|A''|$. 
Thus, 
$|L_q|\leq \frac{r-1}{s} |A''|
\leq \frac{1}{2} |A''|$ for some $q\in\{1,\dots,s\}$. 
Let $A''' := A''\setminus L_q$. Thus 
\begin{align*}
    |A'''| 
\geq \frac12 |A''| 
\geq \frac{|A|}{8r-4}
= \frac{p(m-2s)}{8r-4} 
& = \floor*{\frac{n-2r+1}{2r}} 
\left(\frac{m-4r+2)}{8r-4} \right) \\
& \geq \frac{(n-4r+2)(m-4r+2)}{8r(2r-1)}.
\end{align*}

Let $X':= X \setminus (R_{n-s+1}\cup \dots \cup R_n)\cup R_q$ and $Y':= Y \setminus (R_1 \cup \dots \cup R_s) \cup R_{n-q+1}$. Observe that $J[X']$ and $J[Y']$ are connected and disjoint. 
Let $R:=\{\alpha\}\cup\bigcup\{ P_x: x\in V(A''') \}$. 
So $G'[R]$ is connected, and $X,Y,R,\bigcup\{Z_x:x\in A'''\}$ are pairwise disjoint. 
Each $x\in A'''$ has a neighbour in $R$, and $Z_x$ is adjacent to both $X$ and $Y$. 
Thus $G'[R],G'[X'],G'[Y']$ and $\{Z_x:x\in A'''\}$ form a $K_{3,|A'''|}$ model in $G'$. 
\end{proof}

\cref{K3tGridRadius} leads to a proof of the following result from \cref{Intro}.

\blah*

\begin{proof}
Suppose for contradiction that there is an $n\times n$ grid minor in $G$. 
Let $\alpha$ be a centre of $G$.
So $G-\alpha$ has an $(n-1)\times (n-1)$ grid minor. 
By \cref{K3tGridRadius}, $G$ has a $K_{3,s}$ minor with 
\begin{align*}
    s
  \geq 
 \frac{(n-4r+1)^2}{8r(2r-1)}
\geq\frac{ ( n-4r+1 )^2}{16r^2} 
\end{align*}
Thus
$ ( n-4r+1 )^2 \leq  16r^2s \leq  16r^2(t-1)$, 
implying
$ n \leq  4r\sqrt{t-1}+4r-1 < 4r( 1+\sqrt{t-1})$, a contradiction.
\end{proof}

%%%%%%%%%%%%%%%%%%%%%%%%%%%%%%%%%
\section{Lower Bounds}

The next lemma leads to a proof of \cref{LowerBoundIntro}, which shows that our upper bounds in \cref{SurfaceIntro,K3tMinorFreeIntro} are best possible up to a constant factor. 

\begin{lem}
\label{LowerBound}
For any integers $r,k\geq 1$ there is a graph $G$ with radius at most $r$ and Euler genus at most $2k^2$, such that the $(2r-1)k \times (2r-1)k$ grid is a subgraph of $G$.
\end{lem}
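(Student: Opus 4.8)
The plan is to build $G$ by starting with the $(2r-1)k \times (2r-1)k$ grid and adding a bounded number of "hub" vertices together with some carefully chosen edges so that the radius drops to $r$ while the Euler genus stays $O(k^2)$. First I would partition the grid into a $k \times k$ array of blocks, each block being a $(2r-1) \times (2r-1)$ subgrid. Within a single block of side $2r-1$, the centre vertex is at distance at most $2(r-1)$ from every other vertex of the block, which is almost what we want but not quite; so instead I would add one new vertex $c_{i,j}$ for each block $(i,j)$ and join $c_{i,j}$ to a suitable set of vertices of that block (for instance the centre, or a sparse net of vertices) so that every vertex of the block is within distance $r-1$ of $c_{i,j}$. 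Then I would add a single global centre vertex $\alpha$ adjacent to all the $c_{i,j}$, giving every grid vertex distance at most $1 + 1 + (r-1) = r+1$ from $\alpha$ — so I would actually need to be slightly more economical (e.g. make $\alpha$ adjacent directly into each block, or absorb one of the two extra steps) to hit radius exactly $r$; tuning this telescoping of distances is a routine but fiddly point.

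The genus bound is where the real care is needed. Adding the global vertex $\alpha$ adjacent to $k^2$ block-hubs $c_{i,j}$: since the $c_{i,j}$ can be placed in distinct faces of the planar grid and $\alpha$ reached through a bounded structure, this part should contribute only $O(k^2)$ to the Euler genus — each $c_{i,j}$ with its edges into its own (planar) block is planar, and the star from $\alpha$ to the $k^2$ hubs can be routed using at most one crosscap or handle per hub, giving $O(k^2)$. More precisely, I would invoke the standard fact that adding a vertex adjacent to $x$ chosen vertices of a planar graph increases the Euler genus by at most $x-1$ (route all edges near one vertex, paying a crosscap for each additional target), so the whole construction has Euler genus at most roughly $k^2 - 1 \le 2k^2$. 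The bookkeeping to land on the clean constant $2k^2$ rather than, say, $3k^2$, is the kind of thing one checks at the end by being a little wasteful in the favourable direction.

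The main obstacle, and the step I would spend the most effort on, is making the radius exactly $r$ (not $r+1$ or $r+2$) while keeping the hub edges few enough that the genus stays $\le 2k^2$: there is a genuine tension, since more edges from $\alpha$ and the $c_{i,j}$ shrink distances but cost genus. I expect the right balance is to use one hub per block (so $k^2$ hubs, hence $O(k^2)$ genus) and to let each hub reach its block's corner and have $\alpha$ adjacent to all hubs, then verify that the worst-case grid-vertex-to-$\alpha$ distance is at most $r$ by choosing the hub's neighbourhood inside the block to be a dominating-at-distance-$(r-2)$ set, which exists because the block has radius $2(r-1)$ and a single well-placed vertex dominates it at distance $2(r-1)$; a small net of a bounded number of interior vertices brings this down to $r-2$ without hurting genus since all those edges stay inside the planar block. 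Once the radius and genus are both confirmed, \cref{LowerBound} follows, and (as already indicated in the excerpt) setting $k = \lfloor\sqrt{g/2}\rfloor$ will yield \cref{LowerBoundIntro}.
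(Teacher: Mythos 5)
Your overall plan---partition the grid into $k^2$ blocks of side $2r-1$, give each block a local hub, and join a single global vertex $\alpha$ to the hubs---matches the paper's, but the step that makes the radius drop is not actually achievable the way you set it up. You want a new hub $c_{i,j}$ per block that is (a) within distance $r-1$ of every vertex of its block, and (b) joined to the block \emph{planarly} so that only $\alpha$'s edges cost genus. These two demands conflict: any set $S$ of block vertices that distance-$(r-2)$-dominates a $(2r-1)\times(2r-1)$ block must include vertices spread through the block's interior (in particular near all four corners), and those vertices do not lie on a single face of the grid, so $c_{i,j}$ cannot be joined to all of $S$ without crossings. The phrase ``a small net of a bounded number of interior vertices brings this down to $r-2$\ldots since all those edges stay inside the planar block'' conflates planar shortcut edges \emph{among grid vertices} (which is what is really needed) with edges \emph{from the new hub} (which do not stay inside the block). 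The earlier $1+1+(r-1)=r+1$ accounting is a symptom of the same confusion: you have not pinned down which structure is doing the distance-shrinking.

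The paper's resolution is to move all of the distance-shrinking into the planar layer and dispense with the hub vertices. It takes $W$ to be the $k^2$ block centres, which are grid vertices, not new vertices, and adds edges among grid vertices \emph{while maintaining planarity} (a radiating web of diagonals inside each block) so that every vertex of the grid is within distance $r-1$ of $W$. Then a single new vertex $\alpha$ is joined to $W$, giving radius exactly $1+(r-1)=r$ with no tuning, and each of the $k^2$ edges at $\alpha$ is routed through its own handle, giving Euler genus at most $2k^2$ by a one-line count. No appeal is needed to the sharper apex-over-planar genus bound you cite; that statement, with one crosscap per extra target, is not obviously correct as written and would itself need proof. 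The lesson: once the shortcuts live in the planar grid, the apex step is the only source of genus, and the bookkeeping is trivial.
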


\begin{proof}
As illustrated in \cref{LowerBoundK3t}, initialise $J$ to be the $(2r-1)k\times (2r-1)k$ grid with vertex-set $\{1,\dots,(2r-1)k\}^2$.
Let $W:= \{ ((2r-1)x-(r-1),(2r-1)y-(r-1)) : x,y\in\{1,\dots,k\}\}$. 
Add edges to $J$ so that every vertex in $J$ is at distance at most $r-1$ from $W$ while maintaining planarity (see \cref{LowerBoundK3t}). 
Let $G$ be obtained from $J$ by adding one new vertex $\alpha$ adjacent to each vertex in $W$. 
Every vertex in $G$ is at distance at most $r$ from $\alpha$.
Embed $J$ and $\alpha$ in a sphere $\Sigma_0$.
Let $\Sigma$ be the surface obtained from $\Sigma_0$ by adding $k^2$ handles, so that each edge of $G$ incident to $\alpha$ can be embedded in $\Sigma$ on one handle. This is possible since $\deg_G(\alpha)=|W|=k^2$. The result follows,
since $\Sigma$ has Euler genus $2k^2$. 
\end{proof}

\begin{figure}[t]
\centering
\includegraphics[angle=270]{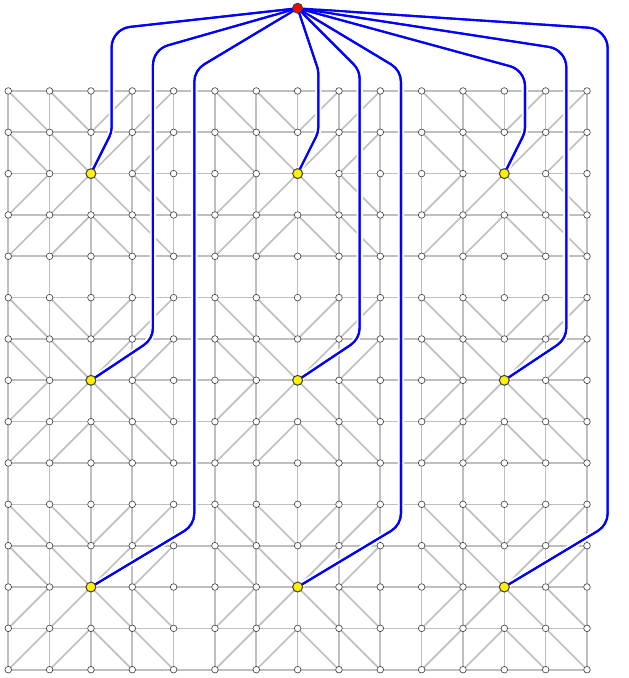}
\caption{Construction in the proof of \cref{LowerBound} with $r=k=3$, where vertices in $W$ are yellow.}
\label{LowerBoundK3t}
\end{figure}

To prove \cref{LowerBoundIntro}, given $g\geq 2$, apply \cref{LowerBound} with $k:=\floor{\sqrt{g/2}}$, implying $G$ has Euler genus at most $2k^2 \leq g$. 

The argument in \cref{LowerBound} generalises as follows. 

\begin{prop}
Let $H$ be a planar triangulation on $t\geq 4$ vertices. Let $A$ be the apex graph obtained from $H$ by adding a dominant vertex $z$. Then for any integer $r\geq1$ there is an $A$-minor-free graph $G$ with radius at most $r$, such that the $n \times n$ grid is a subgraph of $G$, where $n:=(2r-1)(\ceil{\sqrt{(t-3)/6}}-1) \approx r\sqrt{2t/3}$.
\end{prop}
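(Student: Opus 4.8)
The plan is to mimic the construction in \cref{LowerBound}, replacing the regularly-spaced grid $k^2$ points $W$ by a suitably sparse set of ``centre'' vertices chosen so that the apex $z$ can be added while keeping the whole graph $A$-minor-free rather than just of bounded Euler genus. First I would take the $n\times n$ grid $J_0$ with $n=(2r-1)m$, where $m:=\ceil{\sqrt{(t-3)/6}}-1$, and partition it into $m^2$ blocks of side $2r-1$, letting $W$ be the set of block centres (so $|W|=m^2$). As in \cref{LowerBound}, add planar edges to $J_0$ so that every vertex is within distance $r-1$ of $W$, obtaining a planar graph $J$, and form $G$ by adding a new vertex $\alpha$ adjacent to every vertex of $W$; then $G$ has radius at most $r$ and contains the $n\times n$ grid.

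Next I would argue $G$ is $A$-minor-free. Since $A$ is $H$ plus a dominant vertex and $H$ is a triangulation on $t$ vertices, $A$ is a triangulation on $t+1$ vertices (of a surface of some Euler genus), so $|E(A)|=3(t+1)+3(\text{eg}(A))-6$; in particular $A$ has at least $3(t+1)-6 = 3t-3$ edges. I would show that any $A$-model in $G$ must use $\alpha$ as (part of) the branch set of some vertex $a^\*$ of $A$, because $G-\alpha=J$ is planar and $A$ is not (as a triangulation on $\ge 5$ vertices plus an extra dominant vertex is non-planar for $t\ge 4$). Then $G/(\text{everything contracted into }a^\*)$ restricted appropriately is still governed by the planar graph $J$ together with one contracted vertex; the key point is a counting bound. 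Contract the branch set of $a^\*$: the remaining $t$ branch sets lie in $J$ and pairwise they must realise all edges of $A-a^\*=:A'$, which has $|E(A)|-\deg_A(a^\*)$ edges. Because the $W$-vertices are spread out — any two lie at distance $\ge 2r-1$ in $J$, hence $\ge 2$ after any single contraction if they are kept separate — and because contracting one branch set can only ``merge'' $W$-neighbourhoods in a limited way, one obtains that the number of edges that can be represented among $t$ connected subgraphs of $J$ (a planar graph) using at most one shortcutting vertex is less than $3t-3 \le |E(A')|+\deg$, giving a contradiction. I would make this precise via Euler's formula: $t$ disjoint connected subgraphs of a planar graph, after adding one apex adjacent to at most $|W|=m^2$ of them, span at most $3t-3+(\text{something in }m)$ edges, and choosing $m \le \sqrt{(t-3)/6}-1$ (equivalently $6(m+1)^2 \le t-3$, i.e. $6m^2+12m+6 \le t-3$) forces this to be strictly smaller than $|E(A)| \ge 3t-3+?$.

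The main obstacle I anticipate is exactly this counting step: cleanly bounding how many edges of $A$ can be realised when one branch set absorbs $\alpha$ and its $m^2$ neighbours. The naive bound ``planar on $t$ vertices has $\le 3t-6$ edges'' is not quite enough, since absorbing $\alpha$ effectively adds a low-degree apex; the correct statement is that the minor obtained from $J+\alpha$ by any contraction has Euler genus $O(m^2)$ (as in \cref{LowerBound}), but we need the \emph{sharp} relation between $m$ and $t$. I would handle it by noting $A$ has Euler genus $\ge$ some explicit function of $t$ (a triangulation plus dominant vertex triangulates a surface, so $\text{eg}(A) = \tfrac{1}{3}(|E(A)|) - (t+1)+2$ is forced to be at least linear in $t$ once $H$'s genus is controlled — in the worst case $H$ planar gives $\text{eg}(A)$ as small as... ) — honestly, the cleanest route is: $A$ is a triangulation, so it is \emph{edge-maximal} for its genus, hence if $\text{eg}(A)=g_A$ then $A$ has no embedding in any surface of Euler genus $<g_A$, and one shows $G$ embeds (after the relevant contraction analysis) with Euler genus $\le 2m^2$; then $A$-minor-freeness follows from $2m^2 < g_A$ together with minor-monotonicity of Euler genus, and $g_A \ge (t-3)/3$ for a triangulation on $t+1$ vertices. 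Setting $2m^2 < (t-3)/3$, i.e. $m < \sqrt{(t-3)/6}$, i.e. $m \le \ceil{\sqrt{(t-3)/6}}-1$, gives the claimed $n=(2r-1)m$. This last genus-comparison argument is where I'd spend the care; everything else is a direct copy of \cref{LowerBound}.
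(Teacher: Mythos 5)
Your proposal ultimately lands on the paper's own strategy: build $G$ exactly as in \cref{LowerBound} with $k=\ceil{\sqrt{(t-3)/6}}-1$ (so $G$ has Euler genus at most $2k^2$), show $A$ has Euler genus strictly larger than $2k^2$, and conclude $A$-minor-freeness from minor-monotonicity of Euler genus. The long counting detour you sketch first is unnecessary; the paper goes directly to the genus comparison.

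There is, however, a gap in your justification of the key inequality $\text{eg}(A)\geq(t-3)/3$. You assert that $A$ is a triangulation of a surface of its own Euler genus and hence edge-maximal; this is false in general. For instance, $H=K_4$ gives $A=K_5$, which has Euler genus $1$ but only $10$ edges, while a triangulation of the projective plane on $5$ vertices would need $12$ edges. The correct route, which the paper uses, does not need the triangulation claim at all: since $H$ is a planar triangulation on $t$ vertices, $|E(H)|=3t-6$, so adding the dominant vertex $z$ gives $|E(A)|=3t-6+t=4t-6$ and $|V(A)|=t+1$. Euler's formula (stated in the paper as $|E|\leq 3(|V|+g-2)$) then yields $4t-6\leq 3(t-1+\text{eg}(A))$, i.e.\ $\text{eg}(A)\geq(t-3)/3$. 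Your inferred edge lower bound $|E(A)|\geq 3t-3$ is too weak to give this (it yields only $\text{eg}(A)\geq 0$). With the exact edge count in hand, the remaining arithmetic ($2k^2<(t-3)/3\leq\text{eg}(A)$ for $k=\ceil{\sqrt{(t-3)/6}}-1$) is fine, and the rest of your argument matches the paper.
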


\begin{proof}
By construction, $H$ has $3t-6$ edges, and $A$ has $n:=t+1$ vertices and $m:=4t-6$ edges. Say $A$ has Euler genus $g$. By Euler's formula, $4t-6=m\leq 3(n+g-2)= 3( t+1 + g-2)$, implying $g\geq \frac{t-3}{3}$. Let $G$ be the graph in the proof of \cref{LowerBound} with radius $r$ and $k=\ceil{\sqrt{(t-3)/6}}-1$. So the $(2r-1)k\times (2r-1)k$ grid is a subgraph of $G$. Moreover, $G$ has Euler genus at most $2k^2<\frac{t-3}{3}\leq g$, implying $G$ is $A$-minor-free.
 \end{proof}

%%%%%%%%%%%%%%%%%%%%%%%%
\section{Tree-treewidth}\label{ttw}
	
Given a graph parameter $f$, \citet{LNW}  defined a new graph parameter $\text{tree-}f$ as follows. For a graph $G$, let \emph{$\text{tree-}f(G)$} be the minimum integer $k$ such that $G$ has a tree-decomposition $(B_x:x\in V(T))$ such that $f(G[B_x])\leqslant k$ for each node $x\in V(T)$. 
This concept in the case of $\text{tree-}\chi$ was introduced by \citet{Seymour16}. Tree-$\chi$, tree-$\alpha$ and tree-diameter have all recently been widely studied. 
%Given a graph parameter $f$, \citet{LNW}  defined a new graph parameter $\text{tree-}f$ as follows. For a graph $G$, let \emph{$\text{tree-}f(G)$} be the minimum integer $k$ such that $G$ has a tree-decomposition $(B_x:x\in V(T))$ such that $f(G[B_x])\leqslant k$ for each node $x\in V(T)$.  This concept in the case of $\text{tree-}\chi$ was introduced by \citet{Seymour16}, and has since been widely studied~\citep{HK17,BFMMSTT19,HRWY21}. The parameter $\text{tree-}\alpha$ was introduced by \citet{DMS21,DMS24a,DMS24b} and  has also attracted substantial interest~\citep{DFGKM24,DKKMMSW24,MR22,AACHSV24}, especially for classes defined by an excluded induced subgraph. Several papers recently studied tree-diameter~\citep{BS24,DG07}. Note that $\tw(G)=\text{tree-}f(G)$ where $f(G):=|V(G)|-1$. 
\citet{LNW} introduced tree-treewidth. They showed that any proper minor-closed class has bounded tree-treewidth. The proof uses the Graph Minor Structure Theorem, so the bounds are large. It would be interesting to prove this without using the Graph Minor Structure Theorem and with better bounds. Here we do this for apex-minor-free graphs. Recall that for an apex graph $A$, $g(A,r)$ is the maximum treewidth of $A$-minor-free graphs with radius at most $r$. 

\begin{thm}
\label{TTW}
For any apex graph $A$ and any $A$-minor-free graph $G$, 
$$\ttw(G)\leq  g(A,2).$$ 
\end{thm}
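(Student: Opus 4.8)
The plan is to prove that every $A$-minor-free graph $G$ has a tree-decomposition whose every bag induces a subgraph of radius at most $2$; by definition of $g(A,2)$ (as the maximum treewidth over $A$-minor-free graphs of radius at most $2$) and the fact that every subgraph of an $A$-minor-free graph is $A$-minor-free, this immediately gives $\ttw(G)\leq g(A,2)$. So the entire task reduces to constructing, for an arbitrary graph $G$, a tree-decomposition in which each bag has small radius. This is a general graph-theoretic fact that has nothing to do with being $A$-minor-free, and it is essentially the standard BFS-layering tree-decomposition.

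First I would pick a root vertex (or do this componentwise) and run a breadth-first search to obtain the BFS layers $V_0,V_1,V_2,\dots$, where $V_i$ is the set of vertices at distance exactly $i$ from the root. The classical construction (due to essentially folklore, and used e.g. in the context of layered treewidth) takes the underlying tree $T$ of the decomposition to be a path $x_0 x_1 x_2 \dots$ and sets the bag $B_{x_i} := V_i \cup V_{i+1}$. This clearly satisfies the three axioms of a tree-decomposition: every vertex of $V_i$ lies in $B_{x_{i-1}}$ and $B_{x_i}$, which are consecutive; every edge of $G$ joins vertices in the same layer or in consecutive layers, hence lies inside some $B_{x_i}$; and the bags containing any fixed vertex form a (connected) subpath of length at most two. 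The only nontrivial point is to verify that $G[B_{x_i}] = G[V_i\cup V_{i+1}]$ has radius at most $2$: any vertex $v\in V_{i+1}$ has a neighbour in $V_i$, and any vertex $u\in V_i$ is at distance at most $1$ from some fixed vertex, so one should locate a convenient centre. Actually one can do slightly better by making the bag a union of \emph{three} consecutive layers and centring on the middle layer, but two layers already suffice if one argues carefully about which vertex to use as the centre of $G[B_{x_i}]$.

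The one delicate step — and I expect this to be the main obstacle — is pinning down exactly which vertex serves as a centre of $G[V_i\cup V_{i+1}]$ within that induced subgraph, since distances \emph{inside} $G[V_i\cup V_{i+1}]$ can be larger than distances in $G$, and indeed $G[V_i\cup V_{i+1}]$ need not even be connected. The clean fix is to handle each connected component of $G[V_i\cup V_{i+1}]$ and note that a tree-decomposition whose \emph{every} bag induces a graph of radius at most $k$ can be built componentwise: if some $G[B_x]$ is disconnected one replaces the node $x$ by several copies, one per component, each attached to $x$'s neighbours, and takes the bag of each copy to be (the vertices of that component together with enough of the adjacent layers to preserve connectivity of each vertex's subtree). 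Alternatively, and more simply, one observes that it suffices to have each bag of radius at most $2$ after possibly \emph{adding} vertices to bags; so one can enlarge $B_{x_i}$ to $V_{i-1}\cup V_i\cup V_{i+1}$ or even include one BFS-parent per vertex, which makes each component of the induced subgraph have a vertex of $V_i$ reaching everything in at most two steps. I would write up the two-or-three-layer version, verify the tree-decomposition axioms in one line each, and devote the bulk of the argument to the radius bound on each bag, concluding $\ttw(G)\leq g(A,2)$.
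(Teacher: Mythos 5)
Your high-level plan — use the BFS-layered path-decomposition with bags $V_i\cup V_{i+1}$ — is exactly the decomposition the paper uses, but the way you propose to bound the treewidth of each bag does not work. You reduce the problem to showing that every $A$-minor-free graph admits a tree-decomposition in which \emph{each bag induces a subgraph of radius at most $2$}, and then try to achieve this by enlarging bags to three consecutive layers or by adding BFS-parents. That is a dead end: $G[V_i\cup V_{i+1}]$, and even $G[V_{i-1}\cup V_i\cup V_{i+1}]$, can have arbitrarily large radius. For a concrete counterexample, take the $n\times n$ grid $Q$ with vertex-set $\{1,\dots,n\}^2$, add one new vertex $u$ adjacent to every vertex $(1,j)$ in the first row, and root the BFS at $u$. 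Then $V_i$ is exactly the $i$-th row of $Q$, so $G[V_i\cup V_{i+1}]$ is a $2\times n$ grid and $G[V_{i-1}\cup V_i\cup V_{i+1}]$ is a $3\times n$ grid, each with radius about $n/2$. Adding BFS-parents does not repair this, since the parents of $V_i$ sit in $V_{i-1}$, whose parents sit in $V_{i-2}$, and so on; to force radius $2$ this way you would have to take the whole graph. In short, no uniform finite number of BFS layers gives bags of bounded radius, so your proposed lemma is false as stated.

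The missing idea is that the bag does \emph{not} need to have small radius as an induced subgraph of $G$; it only needs to have treewidth at most $g(A,2)$, and for this it suffices that the bag is a \emph{subgraph} of some $A$-minor-free graph of radius at most $2$. The paper achieves this by taking a minor rather than a subgraph: for the $i$-th bag it forms $G_i$ from $G[V_0\cup\dots\cup V_{i+1}]$ by contracting $G[V_0\cup\dots\cup V_{i-1}]$ to a single vertex $u_i$. Now $u_i$ is adjacent to every vertex of $V_i$, and every vertex of $V_{i+1}$ has a neighbour in $V_i$, so $G_i$ has radius at most $2$; since $G_i$ is a minor of $G$ it is $A$-minor-free; hence $\tw(G_i)\leq g(A,2)$. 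Finally, $G[V_i\cup V_{i+1}]$ is a subgraph of $G_i$, and treewidth is monotone under subgraphs, so the $i$-th bag has treewidth at most $g(A,2)$. Note also that, because of this detour through a minor, there is no need for the claim that \emph{subgraphs} of radius-$2$ graphs have small radius (which is false anyway); only treewidth monotonicity under subgraphs is used, together with minor-monotonicity of $A$-minor-freeness.
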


\begin{proof}
We may assume that $G$ is connected. Let $u\in V(G)$ and $V_i:=\{w\in V(G): \dist_G(u,w)=i\}$ for $i\geq 0$. Observe that $(\{u\}\cup V_1\cup V_2,V_2\cup V_3,V_3\cup V_4,\dots)$ is a path-decomposition of $G$. We now bound the treewidth of each bag. Fix $i\geq 1$. Let $G_i$ be the graph obtained from $G[V_0\cup\dots\cup V_{i+1}]$ by contracting $G[V_0\cup \dots\cup V_{i-1}]$ into a vertex $u_i$. Note that $V(G_i)=\{u_i\}\cup V_i\cup V_{i+1}$, where every vertex in $V_i$ is adjacent to $u_i$ and every vertex in $V_{i+1}$ has a neighbour in $V_i$. Thus $G_i$ is $A$-minor-free with radius at most $2$, implying $\tw(G_i)\leq g(A,2)$. The $i$-th bag in the above path-decomposition induces a subgraph of $G_i$, and thus has treewidth at most $g(A,2)$. Hence $\ttw(G) \leq g(A,2)\}$.
\end{proof}

%\begin{proof}
%	We may assume that $G$ is connected. 
%	Let $u\in V(G)$ and $V_i:=\{w\in V(G): \dist_G(u,w)=i\}$ for $i\geq 0$. Observe that 
%	$(V_0\cup V_1,V_1\cup V_2,V_2\cup V_3,\dots)$ is a path-decomposition of $G$. We now bound the treewidth of each bag. 
%	First consider the first bag. Since every vertex in $V_1$ is adjacent to $u$, $G[V_0\cup V_1]$ is an $A$-minor-free graph of radius at most $1$. Thus $\tw(G[V_0\cup V_1]) \leq g(A,1)\leq g(A,2)$. Now consider the bag $G[V_i\cup V_{i+1}]$ where $i\geq 1$. let $G_i$ be the graph obtained from $G[V_0\cup\dots\cup V_{i+1}]$ by contracting $G[V_0\cup \dots\cup V_{i-1}]$ into a vertex $u_i$. Note that $V(G_i)=\{u_i\}\cup V_i\cup V_{i+1}$, where every vertex in $V_i$ is adjacent to $u_i$ and every vertex in $V_{i+1}$ has a neighbour in $V_i$. So $G_i$ is $A$-minor-free with radius at most $2$. Thus $\tw(G_i)\leq g(A,2)$, implying $\tw(G[V_i\cup V_{i+1}])\leq g(A,2)$. Hence, the subgraph induced by each bag of the above path-decomposition has treewidth at most $g(A,2)\}$.
%	%$\max\{ g(A,2),g(A-z)+1\}$.
%\end{proof}

By \cref{TTW,PolyApexMinorTheorem}:

\begin{cor}
For any apex graph  $A$ with $t$ vertices, and for  every $A$-minor-free graph $G$,
$$\ttw(G)\in O^\ast(t^{18}).$$
\end{cor}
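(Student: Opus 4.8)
The plan is to combine \cref{TTW} with \cref{PolyApexMinorTheorem} essentially verbatim, since the corollary follows from a single substitution. First I would recall that \cref{TTW} gives $\ttw(G)\le g(A,2)$ for any apex graph $A$ and any $A$-minor-free graph $G$, where $g(A,r)$ denotes the maximum treewidth over $A$-minor-free graphs of radius at most $r$. So it suffices to bound $g(A,2)$ for an apex graph $A$ with $t$ vertices.

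Next I would invoke \cref{PolyApexMinorTheorem} with $r=2$. Writing $A-z$ planar for some $z\in V(A)$ and $d:=\deg_A(z)\le t-1$, that theorem states that every $A$-minor-free graph of radius at most $2$ has no $16\cdot 2\cdot t\cdot d \times 16\cdot 2\cdot t\cdot d$ grid minor, hence $g(A,2)\in O^\ast((2td)^9)\subseteq O^\ast(t^{18})$, using $d\le t$. Chaining the two inequalities gives $\ttw(G)\le g(A,2)\in O^\ast(t^{18})$, as claimed.

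There is no real obstacle here — the only thing to be careful about is that the bound in \cref{PolyApexMinorTheorem} is stated with explicit dependence on both $r$ and $t$ (and $d$), so one should plug in the constant $r=2$ cleanly and absorb it into the $O^\ast$. Concretely:

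\begin{proof}
Let $A$ be an apex graph with $t:=|V(A)|$ vertices, and let $G$ be $A$-minor-free. By \cref{TTW}, $\ttw(G)\le g(A,2)$, where $g(A,2)$ is the maximum treewidth of an $A$-minor-free graph with radius at most $2$. Choose $z\in V(A)$ such that $A-z$ is planar, and let $d:=\deg_A(z)\le t-1$. Applying \cref{PolyApexMinorTheorem} with $r=2$, every $A$-minor-free graph $H$ with radius at most $2$ satisfies $\tw(H)\in O^\ast(2^9 t^9 d^9)=O^\ast(t^{18})$, since $d\le t$. Hence $g(A,2)\in O^\ast(t^{18})$, and therefore $\ttw(G)\in O^\ast(t^{18})$.
\end{proof}
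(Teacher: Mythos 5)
Your proof is correct and matches the paper's own argument exactly: the paper simply cites \cref{TTW} and \cref{PolyApexMinorTheorem}, and your write-up is the intended instantiation of \cref{TTW} with $r=2$ followed by \cref{PolyApexMinorTheorem} with $d\le t$. No gaps.
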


By \cref{TTW,K3tMinorFreeIntro}:

\begin{cor}
For every $K_{3,t}$-minor-free graph $G$,
$$\ttw(G) \in O^\ast( t^9 ).$$
\end{cor}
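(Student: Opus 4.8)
The plan is to combine \cref{TTW} with \cref{K3tMinorFreeIntro} specialised to radius $r=2$, so that the corollary becomes a short substitution once we observe that $K_{3,t}$ is apex.

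First I would note that $K_{3,t}$ is an apex graph: deleting any one of the three vertices in the part of size $3$ leaves $K_{2,t}$, which is planar. Hence \cref{TTW} applies with $A:=K_{3,t}$, and gives $\ttw(G)\le g(K_{3,t},2)$ for every $K_{3,t}$-minor-free graph $G$, where $g(K_{3,t},2)$ is (by definition) the maximum treewidth over all $K_{3,t}$-minor-free graphs of radius at most $2$.

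It then remains to bound $g(K_{3,t},2)$, and this is exactly what \cref{K3tMinorFreeIntro} provides at $r=2$: every $K_{3,t}$-minor-free graph of radius at most $2$ has no $n\times n$ grid minor with $n=\lceil 8(1+\sqrt{t-1})\rceil\in O(\sqrt{t})$. Feeding this into the Polynomial Grid Minor Theorem in the form of \cref{GMT}, namely $g(k)\in O^\ast(k^9)$, yields $g(K_{3,t},2)\in O^\ast((\sqrt t)^9)=O^\ast(t^{9/2})\subseteq O^\ast(t^9)$. Together with the previous paragraph this gives $\ttw(G)\in O^\ast(t^9)$, as claimed (indeed the argument yields the sharper bound $\ttw(G)\in O^\ast(t^{9/2})$).

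There is no genuine obstacle here: the substantive work has already been carried out in \cref{K3tGridRadius} (which underlies \cref{K3tMinorFreeIntro}) and in \cref{TTW}, whose path-decomposition into balls of width $2$ is the mechanism converting a radius-bounded treewidth estimate into a tree-treewidth estimate. The only points to verify are the elementary facts that $K_{3,t}$ is apex and that setting $r=2$ in the bound of \cref{K3tMinorFreeIntro} produces a quantity of order $\sqrt{t}$ before the ninth power is taken.
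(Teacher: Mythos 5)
Your proof is correct and follows exactly the route the paper intends: observe $K_{3,t}$ is apex, apply \cref{TTW} to reduce to $g(K_{3,t},2)$, then bound that via \cref{K3tMinorFreeIntro} at $r=2$ and the grid-minor bound \eqref{GMT}. Your side remark is also right: this argument actually gives the stronger bound $\ttw(G)\in O^\ast(t^{9/2})$, so the paper's stated $O^\ast(t^9)$ is not tight for its own proof.
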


%%%%%%%%%%%%%%%%%%%%%%%%%%%%%%%%%%%%%%%%%%%%%%%%
\section{Open Problems}

It is an open problem to determine tight bounds on the size of grid-minors in $A$-minor-free graphs with radius $r$, where $A$ is an apex graph with $t$ vertices. \cref{PolyApexMinorTheorem} provides an $O(rt^2)$ upper bound, and \cref{LowerBoundIntro} provides an $\Omega(r\sqrt{t})$ lower bound. There is also an $\Omega(t)$  lower bound  because it is known that if $H$ is the nested triangles graph with $t$ vertices and $H$ is a minor of the $n\times n$ grid, then $n\geq\Omega(t)$ (that is, \cref{PlanarGrid} is tight up to a constant factor). 
%So it is reasonable to expect there is a lower bound of $\Omega(rt)$ for some apex graph with $t$ vertices (at least for $r$ not much more than $t$).

Our second open problem is inspired by the proof of  \cref{key}, which implicitly relies on the following fact: For any integers $p,q\geq 1$ there is a minimum integer $\ell(p,q)$ such that for any graph $G$ with treewidth at least $\ell(p,q)$:
\begin{enumerate}[(a)]
\item there are pairwise disjoint connected subgraphs $G_1,G_2,\dots,G_n$ such that $\tw(G_i)\geq p$ for each $i\in\{1,\dots,n\}$, and 
\item contracting each of $G_1,\dots,G_n$ to a vertex gives a minor of $G$ with treewidth at least $q$. 
\end{enumerate}
We claim that $\ell(p,q)\leq g( pq)$. Say  $\tw(G)\geq g(pq)$. 
So the $pq\times pq$ grid is a minor of $G$. 
Partition this grid into $q^2$ copies of the $p\times p$ grid. 
Each subgrid has treewidth $p$ and contracting each subgrid gives a $q\times q$ grid, which has treewidth $q$. By \eqref{GMT},  $\ell(p,q)\leq O^\ast( (pq)^9 )$. 
Are there better bounds on $\ell$? 
We propose the following.
\begin{conj}\label{con1}
$\ell(p,q)\leq O^\ast(pq)$.
\end{conj}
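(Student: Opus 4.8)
The plan is to replace the $pq\times pq$ grid minor used in the argument above by a much weaker ``product-type'' certificate of treewidth, one whose existence is guaranteed already at near-linear treewidth. The naive argument wastes nothing except the invocation of \eqref{GMT}: what it actually extracts is a minor $M$ of $G$ together with a partition of $V(M)$ into classes, each class inducing a connected graph of treewidth at least $p$, such that contracting the classes yields a graph of treewidth at least $q$. (For the grid this partition is into $q^2$ sub-$p$-grids.) So the task reduces to producing such a ``blow-up of a treewidth-$q$ graph by treewidth-$p$ pieces'' whenever $\tw(G)\ge O^\ast(pq)$, which is a structural statement linear in the target.

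First I would try grid-like minors in the sense of Reed and Wood, with the near-linear threshold of Kreutzer and Tazari: if $\tw(G)\ge c\,\ell\,\polylog\ell$ then $G$ contains a family $\PP$ of paths whose intersection graph has a $K_\ell$-minor. Take $\ell$ of order $pq$; this gives connected, pairwise-adjacent branch sets $W_1,\dots,W_\ell\subseteq\PP$ in the intersection graph, and hence connected subgraphs $B_i:=\bigcup_{P\in W_i}P$ of $G$ that are pairwise touching. If one could pass to a subfamily for which the $B_i$ are pairwise \emph{disjoint} while still pairwise adjacent, then partitioning the $\ell$ indices into roughly $q$ blocks of size roughly $p$ would realise a $K_p$-minor inside each block (treewidth at least $p-O(1)$) and a $K_q$-minor after contracting the blocks (treewidth at least $q-O(1)$); adjusting $p,q$ by additive constants then gives (a)--(b). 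An alternative engine worth trying is a Chekuri--Chuzhoy-style ``system of well-linked sets'': $\Theta(q)$ pairwise well-linked sets of size $\Theta(p)$, each localised inside a connected subgraph of treewidth $\Omega(p)$, with enough surviving disjoint routing between them to force treewidth $\Omega(q)$ after contracting those subgraphs.

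I expect the main obstacle to be exactly the gap between ``pairwise touching'' and ``pairwise disjoint'': treewidth does not force a $K_{pq}$-minor (grids are the obstruction), which is why one is driven to grid-like minors in the first place, and the Menger-type surgery needed to turn a grid-like minor of order $\ell$ into genuinely disjoint branch subgraphs typically costs constant or polylogarithmic --- possibly polynomial --- factors, and must be carried out without destroying either the per-block treewidth $p$ or the post-contraction treewidth $q$. A second, deeper difficulty is that ``grid-of-sets'' type structures (the natural way to get post-contraction treewidth $q$ rather than a mere path) are currently only known to appear at treewidth polynomial in $pq$, mirroring the Grid Minor Theorem itself; obtaining the conjectured $O^\ast(pq)$ would essentially require a ``product structure theorem for treewidth'' with optimal parameters, which is not available. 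A reasonable intermediate goal, attainable with the tools above, would be $\ell(p,q)\le O^\ast((pq)^{2})$, or even $\ell(p,q)\le(pq)^{1+o(1)}$.
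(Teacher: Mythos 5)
This statement is an \emph{open conjecture} in the paper: the authors prove only $\ell(p,q)\in O^\ast((pq)^9)$ via the Polynomial Grid Minor Theorem, and explicitly pose $\ell(p,q)\leq O^\ast(pq)$ as \cref{con1}. So there is no proof in the paper to compare against, and your proposal is (rightly) framed as a sketch of approaches rather than a proof.

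Your diagnosis of the core obstruction is correct and worth stressing. In a grid-like minor, the branch sets $W_1,\dots,W_\ell$ of the $K_\ell$-model live in the \emph{intersection graph} of the path system, so two adjacent $W_i,W_j$ correspond to paths that \emph{meet} in $G$; the sets $B_i=\bigcup_{P\in W_i}P$ are therefore essentially guaranteed to overlap, not merely ``touch.'' Passing to genuinely disjoint, pairwise-adjacent $B_i$ would produce a $K_\ell$-minor in $G$ with $\ell=\Theta(pq)$, which is precisely what large treewidth does \emph{not} give (grids again being the obstruction). So the first route you describe cannot be repaired by ``Menger-type surgery'' at any cost --- it implicitly targets a $K_{pq}$-minor, which is strictly stronger than properties (a)--(b) require. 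Any successful proof must exploit that (a)--(b) are weaker than a clique minor (the $pq\times pq$ grid itself witnesses this), which is why the paper's own example ($K_{pq}$ into $q$ copies of $K_p$) is only an illustration, not a reduction. One smaller quibble: the near-linear threshold you attribute to Kreutzer--Tazari for grid-like minors of order $\ell$ is optimistic; the known thresholds are polynomial in $\ell$ (Reed--Wood give roughly $\ell^4\sqrt{\log\ell}$), so even the intermediate targets you name at the end would need a different engine. In short, your assessment that the conjecture is out of reach with current tools agrees with the paper's framing of it as open; just be explicit that you are not offering a proof, and that the grid-like-minor route as written cannot close the gap even in principle.
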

For example, $K_{pq}$ has $q$ pairwise disjoint copies of $K_p$ (each with treewidth $p-1$), such that contracting each copy gives $K_q$ (with treewidth $q-1$). \citet{CC13} have results about partitioning a graph with large treewidth into subgraphs with large treewidth (thus satisfying (a)), but without property (b). 
Indeed, \cref{con1} would imply and strengthen Conjecture 1 in \cite{CC13}, which considers only the number of vertex-disjoint graphs of treewidth $p$.

\paragraph{Acknowledgement: }
Research of both authors is supported by the Australian Research Council. Partially completed while the second author was in residence at the \emph{Extremal Combinatorics} program of the Simons Laufer Mathematical Sciences Institute in Berkeley, California (supported by the NSF grant DMS-1928930). 
		
{\fontsize{10pt}{11pt}
\selectfont
\bibliographystyle{DavidNatbibStyle}
\bibliography{DavidBibliography}}

\newpage
\appendix
\end{document}